\theoremstyle{plain}
\newtheorem{prop}{Proposition}
\numberwithin{prop}{section}
\newtheorem*{thrm*}{Theorem}
\newtheorem{thrm}[prop]{Theorem}
\newtheorem{mthrm}{Theorem}
\newtheorem{cons}[prop]{Corollary}
\newtheorem{lmm}[prop]{Lemma}
\newenvironment{mainthrms}[1]
  {\innermainthrms}
  {\endinnermainthrms}
\newenvironment{maincons}[1]
  {\innermaincons}
  {\endinnermaincons}
\theoremstyle{definition}
\newtheorem*{rk}{Remark}
\newtheorem*{rks}{Remarks}
\newtheorem{defi}[prop]{Definition}
\newtheorem{nota}[prop]{Notation}
\newcommand{\longequal}{\ensuremath{=\joinrel=\joinrel=}}
\renewcommand{\leq}{\leqslant}
\renewcommand{\geq}{\geqslant}
\renewcommand{\subset}{\subseteq}
\DeclareMathOperator{\Stab}{Stab}
\DeclareMathOperator{\Ker}{Ker}
\DeclareMathOperator{\Imm}{Im}
\DeclareMathOperator{\id}{id}
\DeclareMathOperator{\ad}{ad}
\DeclareMathOperator{\he}{he}
\DeclareMathOperator{\HE}{HE}
\DeclareMathOperator{\fhe}{fhe}
\DeclareMathOperator{\FHE}{FHE}
\DeclareMathOperator{\Deck}{Deck}
\DeclareMathOperator{\Aut}{Aut}
\DeclareMathOperator{\Out}{Out}
\DeclareMathOperator{\Inn}{Inn}
\newcommand{\address}[1]{\gdef\@address{#1}}
\newcommand{\email}[1]{\gdef\@email{\texttt{#1}}}
\newcommand{\@endstuff}{\par\vspace{\baselineskip}\noindent\small
\begin{flushleft}{\scshape\@address}\newline\textit{E-mail address:} \@email\end{flushleft}}
\title{Free representations of outer automorphism groups of free products via characteristic abelian coverings}
\author{Alexis Marchand}
\address{Institut Fourier, UMR 5582, Laboratoire de Mathématiques,\newline Université Grenoble Alpes, CS 40700, 38058 Grenoble cedex 9, France}
\email{aptm3@cam.ac.uk}
\begin{document}

\maketitle

\begin{abstract}
    Given a free product $G$, we investigate the existence of faithful free representations of the outer automorphism group $\Out(G)$, or in other words of embeddings of $\Out(G)$ into $\Out\left(F_m\right)$ for some $m$. This is based on a work of Bridson and Vogtmann in which they construct embeddings of $\Out\left(F_n\right)$ into $\Out\left(F_m\right)$ for some values of $n$ and $m$ by interpreting $\Out\left(F_n\right)$ as the group of homotopy equivalences of a graph $X$ of genus $n$, and by lifting homotopy equivalences of $X$ to a characteristic abelian cover of genus $m$. Our construction for a free product $G$, using a presentation of $\Out(G)$ due to Fuchs-Rabinovich, is written as an algebraic proof, but it is directly inspired by Bridson and Vogtmann's topological method and can be interpreted as lifting homotopy equivalences of a graph of groups. For instance, we obtain a faithful free representation of $\Out(G)$ when $G=F_d\ast G_{d+1}\ast\cdots\ast G_n$, with $F_d$ free of rank $d$ and $G_i$ finite abelian of order coprime to $n-1$.
\end{abstract}

\section{Introduction}

    \paragraph{}
    Given a group $G$, denote by $\Aut(G)$ the group of automorphisms of $G$. Write $\Inn(G)$ for the normal subgroup of $\Aut(G)$ consisting of conjugations by elements of $G$ and consider the quotient\[\Out(G)=\Aut(G)/\Inn(G).\]The group $\Out(G)$ is called the \emph{outer automorphism group} of $G$.
    
    The study of $\Out\left(F_n\right)$, where $F_n$ is the free group of rank $n$, gained momentum following the work of Culler and Vogtmann \cite{cv} who introduced the concept of outer space, an analogue for graphs of the Teichmüller space of a surface, and on which $\Out\left(F_n\right)$ acts. It seems natural to try to generalise by studying $\Out(G)$ when $G$ is a free product. Let us mention in this direction the construction of an outer space associated with a free product by Guirardel and Levitt \cite{gl}.
    
    A possible strategy for studying $\Out(G)$ is to try to embed it into a better-understood group, for instance $\Out\left(F_n\right)$. Following the terminology coined by Kielak \cite{kielak}, and by analogy with linear representations, we will call such a morphism from a group $\Gamma$ to $\Out\left(F_n\right)$ a \emph{free representation} of $\Gamma$.
    
    We are led to hope that the existence of faithful free representations of a group could lead to extend properties of $\Out\left(F_n\right)$ to this group. For instance, we say that a group $\Gamma$ satisfies the \emph{Tits alternative} relative to a class $\mathcal{C}$ of groups if every subgroup of $\Gamma$ is either virtually in $\mathcal{C}$ or contains a non-abelian free subgroup. Bestvina, Feighn and Handel \cite{bfh,bfh2} proved that $\Out\left(F_n\right)$, and therefore any group admitting a faithful free representation, satisfies the Tits alternative relative to abelian groups. This is parallel to Tits' original result \cite{tits}, according to which any linear group (i.e. any group admitting a faithful linear representation) satisfies the Tits alternative relative to solvable groups.
    
    Furthermore, Collins \cite{collins} showed that, if $G$ is a free product of a finite number of finite groups, then there is a free finite index subgroup $F$ of $G$ such that $\Aut(G)/F$ embeds into $\Out(F)$. This enables Collins to use cohomological results of Culler and Vogtmann \cite{cv} on $\Out\left(F_n\right)$ to deduce that $\Out(G)$ is virtually torsion free and of finite virtual cohomological dimension.
    
    We therefore set ourselves the task of constructing faithful free representations of the group $\Out(G)$, where $G$ is a free product, in the hope that this could lead to generalisations of some structural results that are known for $\Out\left(F_n\right)$.
    
    \paragraph{}
    Towards this goal, we take inspiration from work of Bridson and Vogtmann \cite{bv} in which they explore the existence of embeddings $\Out\left(F_n\right)\hookrightarrow\Out\left(F_m\right)$ for different values of $n$ and $m$. Their method consists in viewing $F_n$ as the fundamental group of a graph $X$ of genus $n$ and in analysing certain characteristic abelian coverings $p:\hat{X}\rightarrow X$ with $\pi_1\hat{X}\cong F_m$.
    
    In this context, write $N=\pi_1\hat{X}\leq\pi_1X=F_n$. Consider the Galois group $\Deck(p)$ of the covering, the group $\HE(X)$ of homotopy equivalences of $X$ up to homotopy, and the group $\FHE(p)$ of fibre-preserving homotopy equivalences of $\hat{X}$ up to fibrewise homotopy (see \cite{bv} for precise definitions). Bridson and Vogtmann \cite[Prop. 1]{bv} prove that there is an equivalence of exact sequences:
    \begin{center}
        \begin{tikzpicture}[align=center, node distance = 1cm, auto]
            \node at (-1.5,0) (Y) {$1$};
            \node at (0,0) (Z) {$F_n/N$};
            \node at (2,0) (A) {$\Aut\left(F_n\right)/N$};
            \node at (4,0) (B) {$\Out\left(F_n\right)$};
            \node at (5.5,0) (C) {$1$};
            \draw [->] (Y) -> (Z);
            \draw [->] (Z) -> (A);
            \draw [->] (A) -> (B);
            \draw [->] (B) -> (C);
            \node [ below of=Y] (Yp) {$1$};
            \node [ below of=Z] (Zp) {$\Deck(p)$};
            \node [ below of=A] (Ap) {$\FHE(p)$};
            \node [ below of=B] (Bp) {$\HE\left(X\right)$};
            \node [ below of=C] (Cp) {$1$};
            \draw [->] (Yp) -> (Zp);
            \draw [->] (Zp) -> (Ap);
            \draw [->] (Ap) -> (Bp);
            \draw [->] (Bp) -> (Cp);
            \draw (0,-0.5) node[rotate=-90] {$\cong$};
            \draw (2,-0.5) node[rotate=-90] {$\cong$};
            \draw (4,-0.5) node[rotate=-90] {$\cong$};
        \end{tikzpicture}
    \end{center}
    
    From this observation, it follows that we can construct an embedding $\Out\left(F_n\right)\hookrightarrow\Aut\left(F_n\right)/N$ by splitting the projection $\FHE(p)\twoheadrightarrow\HE\left(X\right)$, or in more topological terms, by lifting homotopy equivalences of $X$ to the covering space $\hat{X}$. But since $N$ is characteristic in $F_n$, there is a restriction homomorphism $\Aut\left(F_n\right)/N\rightarrow\Out(N)$, which is injective in most cases, and therefore we can sometimes obtain an embedding $\Out\left(F_n\right)\hookrightarrow\Out(N)$ by composition. This allows Bridson and Vogtmann \cite[Cor. A]{bv} to show that, if $m=r^n(n-1)+1$, with $r$ coprime to $n-1$, then there is an embedding $\Out\left(F_n\right)\hookrightarrow\Out\left(F_m\right)$.
    
    \paragraph{}
    We aim to apply this method when the free group $F_n$ is replaced by a free product $G$. Our first idea was to generalise Bridson and Vogtmann's equivalence of exact sequences to the setting of graphs of groups. To do this, we changed viewpoints and focused on the action of groups on trees, following an idea that is used for instance by Guirardel and Levitt \cite{gl}. Our objects of interest were therefore trees equipped with group actions. Noting that in the case of graphs --- which correspond to trees with free actions --- notions of homotopy and coverings can be given purely combinatorial definitions, we worked with analogous definitions for general trees with actions and obtained an equivalence of exact sequences generalising Bridson and Vogtmann's observation. More details can be found in the author's Master's thesis \cite{marchand}.
    
    This work on homotopy and coverings of trees with group actions was very helpful for intuition: indeed, if phrased in this language, our construction of embeddings $\Out(G)\hookrightarrow\Out(N)$ is a direct generalisation of Bridson and Vogtmann's embeddings $\Out\left(F_n\right)\hookrightarrow\Out\left(F_m\right)$. However, it turns out that the actual proofs can be made shorter and in our opinion clearer in the algebraic setting, i.e. by splitting the upper exact sequence\[1\rightarrow G/N\rightarrow\Aut(G)/N\rightarrow\Out(G)\rightarrow1\]without refering to the topological lower one.
    
    To split the projection $\Aut(G)/N\twoheadrightarrow\Out(G)$, we use a presentation of $\Out(G)$ due to Fuchs-Rabinovich \cite{fr1,fr2}. Each element of the generating set $S$ of $\Out(G)$ is defined by specifying a representative in $\Aut(G)$. This gives rise to a map $S\rightarrow\Aut(G)/N$, but this map does not respect the relations of $\Out(G)$. Therefore, we correct the image of each generator and check that the relations are satisfied, which yields a group homomorphism $\Out(G)\rightarrow\Aut(G)/N$ which is a splitting of the projection. Our corrections are analogous to those of Bridson and Vogtmann in \cite{bv}, where they use a presentation of $\Out\left(F_n\right)$ to obtain a splitting of $\Aut\left(F_n\right)/N\twoheadrightarrow\Out\left(F_n\right)$.
    
    \paragraph{}
    We obtain the following results.
    
    \begin{mainthrms}{\ref{thrm_plong_outg} and \ref{thrm_plong_outg_2}}
        Let $G=F_d\ast G_{d+1}\ast\cdots\ast G_n$, with $n\geq 2$, $F_d$ free of rank $d$, $G_{d+1},\dots,G_n$ abelian and not isomorphic to $\mathbb{Z}$.
        \begin{enumerate}
            \item[\textnormal{(\ref{thrm_plong_outg})}] If $d=0$, choose an integer $r_i\in\mathbb{Z}$ coprime to $n-1$ for each $i\in\left\{1,\dots,n\right\}$, in such a way that $r_i=r_j$ as soon as $G_i\cong G_j$, and set $N=G'G_1^{r_1}\cdots G_n^{r_n}$.
            \item[\textnormal{(\ref{thrm_plong_outg_2})}] In general, choose an integer $r\in\mathbb{Z}$ coprime to $n-1$ and set $N=G'G^r$.
        \end{enumerate}
        In both cases, there is an embedding $\Out(G)\hookrightarrow\Out(N)$.
    \end{mainthrms}
    
    We are particularly interested in the cases where $N$ is free of finite rank, because they give us faithful free representations of $\Out(G)$. For example, we have the following two corollaries.
    
    \begin{maincons}{\ref{cons_rep_libre}}
        Let $G=F_d\ast G_{d+1}\ast\cdots\ast G_n$, with $n\geq2$, $F_d$ free of rank $d$, $G_{d+1},\dots,G_n$ finite abelian. We assume that $n-1$ is coprime to the order $\left|G_i\right|$ of each factor $G_i$. Then there is a free subgroup $F$ of finite rank and of finite index in $G$ such that there is an embedding\[\Out(G)\hookrightarrow\Out(F).\]In particular, $\Out(G)$ has a faithful free representation.
    \end{maincons}
    \begin{maincons}{\ref{cons_rep_libre_2}}
        Let $G=F_d\ast G_{d+1}\ast\cdots\ast G_n$, with $F_d$ free of rank $d$, $G_{d+1},\dots,G_n$ finite abelian. Then there is an integer $k\geq0$ such that $\Out\left(F_k\ast G\right)$ has a faithful free representation.
    \end{maincons}
    
    It is already known that $\Out(G)$ satisfies the Tits alternative relative to abelian groups whenever $G$ is a free product with the assumptions of Corollary \ref{cons_rep_libre}. In fact, to prove that a group $\Gamma$ satisfies the Tits alternative relative to some class $\mathcal{C}$ of groups, it suffices to construct a \emph{virtual embedding} of $\Gamma$ into another group $\Delta$ satisfying the Tits alternative relative to $\mathcal{C}$, i.e. an embedding of a finite index subgroup of $\Gamma$ into $\Delta$. In particular, if $G=F_d\ast G_{d+1}\ast\cdots\ast G_n$ with $G_i$ finite (not necessarily abelian), then we can use a result of Carette \cite[Theorem A]{carette} to deduce that the projection $p:\Aut(G)\twoheadrightarrow\Out(G)$ splits virtually, i.e. there is a finite index subgroup $H$ of $\Out(G)$ and an embedding $i:H\hookrightarrow\Aut(G)$ such that $p\circ i=\id_H$. This implies in particular that $\Out(G)$ virtually embeds into $\Aut(G)$ and that its image intersects $\Inn(G)$ trivially. Now, by Lemma \ref{lmm_collins} of this text, if we pick a free characteristic finite index subgroup $F$ of $G$, then the restriction map $\Aut(G)\rightarrow\Aut(F)$ is injective, from which it follows by composition that $\Out(G)$ virtually embeds into $\Out(F)$. Since $\Out\left(F\right)$ satisfies the Tits alternative relative to abelian groups by the Bestvina-Feighn-Handel Theorem \cite{bfh,bfh2}, so does $\Out(G)$.
    
    However, our results give an alternative proof: under the assumptions of Corollary \ref{cons_rep_libre}, $\Out(G)$ has a faithful free representation, so it inherits the Tits alternative from $\Out\left(F_n\right)$.
    
    \paragraph{}
    Using the above results, we go on to examine the special case of $\Out\left(W_n\right)$ in Section \ref{sec_univ_coxeter}, where $W_n$ is the \emph{universal Coxeter group} of rank $n$, i.e. the free product of $n$ copies of $\mathbb{Z}/2$. If $n$ is even, then Corollary \ref{cons_rep_libre} applies and $\Out\left(W_n\right)$ has a faithful free representation. Looking at the construction more closely, we show that there is an embedding $\Out\left(W_n\right)\hookrightarrow\Out\left(F_m\right)$ with $m=2^{n-1}(n-2)+1$. Computing an explicit value of $m$ such that $\Out(G)$ embeds into $\Out\left(F_m\right)$ can actually be done with the same method for all free products $G$ satisfying the assumptions of Corollary \ref{cons_rep_libre}, but we limit ourselves to the special case of $W_n$ where the resulting formula is simple enough. We also use the example of $\Out\left(W_n\right)$ to check that our method of construction of free representations is not exhaustive: for instance, when $n=3$, $\Out\left(W_3\right)$ admits a faithful free representation not given by Corollary \ref{cons_rep_libre}. We expect $\Out(G)$ to have faithful free representations for many other free products $G$, as our method is probably a very special trick that we can use to construct free representations in specific cases.

    \paragraph{Structure of the paper.}
    We start by recalling some definitions and stating Grushko's Theorem in Section \ref{sect_not}. We then proceed to find some interesting characteristic subgroups $N$ of free products $G$ in Section \ref{sect_char_ab_cover}, and we describe the corresponding coverings of graphs of groups. Section \ref{sect_constr_embed_1} is devoted to the construction of embeddings $\Out(G)\hookrightarrow\Out(N)$ in the case where $G$ has no free factor; this is Theorem \ref{thrm_plong_outg}. The general case is similar except that the presentation of $\Out(G)$ that we use becomes more complicated; this is explained in Section \ref{sect_second_emb_result} and leads to Theorem \ref{thrm_plong_outg_2}. Finally, Section \ref{sec_univ_coxeter} discusses the special case of universal Coxeter groups.
    
    \paragraph{Acknowledgements.}
    This work is based on the author's Master's thesis, which was written in Institut Fourier, Grenoble. The author wishes to thank François Dahmani for suggesting this topic, supervising his thesis and helping with the preparation of this paper. The author is thankful to the anonymous referees for various helpful comments and for pointing out work of Carette and its application to the Tits alternative for $\Out(G)$ in a more general setting than that of this paper. Financial support from an ENS Lyon studentship is gratefully acknowledged.
    
\section{Notations and preliminaries}\label{sect_not}
    
    \paragraph{Conjugation action and outer automorphisms.}
    Given a group $G$ and an element $\gamma\in G$, there is an automorphism $\ad(\gamma)\in\Aut(G)$ defined by $\ad(\gamma):g\mapsto\gamma g\gamma^{-1}$. The map $\gamma\mapsto\ad(\gamma)$ defines a group homomorphism $\ad:G\rightarrow\Aut(G)$. This homomorphism is injective if $G$ has trivial centre. The group of \emph{inner automorphisms} of $G$ is the normal subgroup $\Inn(G)=\ad(G)\trianglelefteq\Aut(G)$ and the group of \emph{outer automorphisms} is the quotient $\Out(G)=\Aut(G)/\Inn(G)$.
    
    A subgroup $N$ of $G$ is \emph{characteristic} if it is preserved by all automorphisms of $G$.
    
    When we write an arrow $G\rightarrow\Aut(G)$ without further precision, this will always refer to the morphism $\ad$.
    
    \paragraph{Grushko's Theorem.}
    The fundamental theorem for the sequel is a rigidity result for free products. This result if often combined with the existence of free product decompositions and stated as follows.
    \begin{thrm*}[Grushko]
        If $G$ is a finitely generated group, then there are freely indecomposable groups $G_1,\dots,G_m$ not isomorphic to $\mathbb{Z}$, and a free group $F_r$ of rank $r\geq 0$ such that\[G=G_1\ast\cdots\ast G_m\ast F_r.\]If we have another decomposition $G=H_1\ast\cdots\ast H_n\ast F_s$ with $H_1,\dots,H_n$ freely indecomposable not isomorphic to $\mathbb{Z}$ and $F_s$ free of rank $s$, then $m=n$, $r=s$, and there is a permutation $\sigma\in\mathfrak{S}_m$ and elements $\gamma_1,\dots,\gamma_m\in G$ such that $H_i=\gamma_iG_{\sigma(i)}\gamma_i^{-1}$ for all $i$.
    \end{thrm*}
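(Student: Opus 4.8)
This is a classical theorem; the plan is to sketch a proof in the Bass--Serre-theoretic language of the paper, treating the \emph{existence} of the decomposition and its \emph{rigidity} (uniqueness) separately.

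For existence, I would first record the elementary dichotomy that every nontrivial finitely generated group is either freely indecomposable or splits as $G = A \ast B$ with both factors nontrivial, and then argue that iterated splitting must terminate. The engine for termination is Grushko's rank formula: if $G = A \ast B$ then $\rg(G) = \rg(A) + \rg(B)$, where $\rg$ denotes the minimal number of generators. Granting this, any free product decomposition of $G$ into $k$ nontrivial factors satisfies $k \leq \rg(G) < \infty$, so a decomposition maximising $k$ exists; by maximality each factor is freely indecomposable, and collecting the factors isomorphic to $\mathbb{Z}$ into a single free group $F_r$ produces the form $G = G_1 \ast \cdots \ast G_m \ast F_r$. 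I expect the rank formula to be the genuine obstacle here: I would establish it by a Stallings folding argument, representing a generating set of $G$ by a morphism from a wedge of $\rg(G)$ circles into a graph of groups realising $A \ast B$ and folding to extract generators of the two factors.

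For rigidity I would pass to Bass--Serre trees. A decomposition $G = G_1 \ast \cdots \ast G_m \ast F_r$ corresponds to a minimal $G$-action on a simplicial tree $T$ with trivial edge stabilisers whose nontrivial vertex stabilisers are exactly the conjugates of the $G_i$. The key lemma to isolate is that a freely indecomposable group $H \not\cong \mathbb{Z}$ acting on a tree with trivial edge stabilisers must fix a vertex: otherwise the Bass--Serre structure theorem would express $H$ as the fundamental group of a graph of groups with trivial edge groups, forcing $H$ to be a nontrivial free product or infinite cyclic, against the hypotheses. Applying this to each factor $G_i$ acting on the tree $T'$ of a second decomposition $G = H_1 \ast \cdots \ast H_n \ast F_s$, I obtain that $G_i$ is elliptic in $T'$ and hence conjugate into some $H_j$; the symmetric argument shows each $H_j$ is conjugate into some $G_i$. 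Since a nontrivial group fixes a unique vertex of a tree with trivial edge stabilisers, chasing these mutual inclusions between freely indecomposable factors forces them to be conjugacy equalities, yielding the permutation $\sigma$ and elements $\gamma_i$ with $H_i = \gamma_i G_{\sigma(i)} \gamma_i^{-1}$, and in particular $m = n$.

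Finally I would match the free parts using the rank formula once more. Applying $\rg(G) = \sum_i \rg(G_i) + r = \sum_j \rg(H_j) + s$ and noting that the matched factors $G_{\sigma(i)}$ and $H_i$ are isomorphic, hence of equal rank, the two sums over the non-free factors agree, leaving $r = s$. The only delicate points I anticipate are the bookkeeping in this final count and, more seriously, the proof of Grushko's rank formula underlying existence; by contrast, the tree-theoretic rigidity argument is routine once the ellipticity lemma is in place.
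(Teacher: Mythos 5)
The paper does not actually prove this statement: it is quoted as classical background, with the uniqueness attributed to Kurosh (the \emph{Isomorphiesatz}, derivable from the \emph{Untergruppensatz}, which Serre proves via Bass--Serre theory) and the existence reduced to Grushko's additivity of rank. Your sketch is correct, and in substance it reconstructs exactly the route the paper points to, with two genuine substitutions. First, for rigidity you bypass the full Kurosh subgroup theorem in favour of the lighter ellipticity lemma --- a freely indecomposable group $H\not\cong\mathbb{Z}$ acting on a tree with trivial edge stabilisers fixes a vertex --- which is all the uniqueness argument needs and is cleaner than invoking the \emph{Untergruppensatz}; it also meshes well with the tree-centric language of the rest of the paper. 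Second, for the rank formula you propose Stallings foldings rather than Grushko's original combinatorial cancellation argument; this is the modern standard and is sound, though it is where the real work lies, as you correctly anticipate. Two small points to tighten, neither a gap: in the ellipticity lemma, justify the passage to the Bass--Serre structure theorem --- either note that if every element of $H$ is elliptic then, edge stabilisers being trivial, two nontrivial elements with distinct fixed vertices would have hyperbolic product, so $H$ fixes a vertex outright, and otherwise the union of axes of hyperbolic elements is a minimal invariant subtree to which the structure theorem applies; and in the final matching, the injectivity of the assignment $i\mapsto j$ deserves a word, namely that a nontrivial free factor fixes a unique vertex and distinct factors fix vertices in distinct $G$-orbits, so no two $G_i$ can be conjugate into the same $H_j$ (this is also what upgrades the mutual inclusions $G_i\subseteq\gamma H_j\gamma^{-1}\subseteq c\,G_i\,c^{-1}$ to equalities, since $c$ must then stabilise the fixed vertex of $G_i$ and hence lie in $G_i$). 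Finally, your appeal to the rank formula in the count $r=s$ tacitly uses that all factors are finitely generated; this is automatic since each free factor is a retract of the finitely generated group $G$, but it is worth saying.
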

    In fact, we will only need the uniqueness of the decomposition. It seems that Kurosh should be credited with the uniqueness result, which is named \emph{Isomorphiesatz} in \cite{kurosh}. It can be derived as a corollary of Kurosh's \emph{Untergruppensatz} (see \cite{kurosh}), at least in the case where $G$ has no free factor. The original argument is combinatorial, but the \emph{Untergruppensatz} can be proved simply with the help of Bass-Serre Theory (see Serre's book \cite[\S{} I.5.5]{serre}).
    
    Grushko's contribution \cite{grushko} was to prove that the rank is additive for the operation of free product. The existence of the decomposition follows easily.

\section{Characteristic abelian covers associated with a free product}\label{sect_char_ab_cover}
    
    \paragraph{}
    Our goal is to apply Bridson and Vogtmann's method \cite{bv} to construct embeddings of $\Out(G)$, where $G$ is a free product. We first limit ourselves to the case where $G$ is a free product of groups that are freely indecomposable and not isomorphic to $\mathbb{Z}$ (i.e. there is no free factor in the Grushko decomposition of $G$). Moreover, we will need to assume that the factors are abelian. In Section \ref{sect_second_emb_result}, we will see how to deal with the case where some of the factors are isomorphic to $\mathbb{Z}$ (i.e. where $G$ has a free factor).
    
    The characteristic cover $\hat{X}$ of the graph $X$ chosen by Bridson and Vogtmann to construct embeddings of $\Out\left(F_n\right)$ is a finite abelian cover, i.e. a normal cover whose Galois group is finite abelian. A good reason to make this choice is the simplicity of the geometry of $\hat{X}$: it is the Cayley graph of $\left(\mathbb{Z}/r\right)^n$ with its standard generating set, which can be seen as a grid embedded in an $n$-dimensional torus.
    
    Analogously, we want to find an interesting finite characteristic abelian cover corresponding to a free product, or in other words a finite index characteristic subgroup $N$ of $G$ such that $G/N$ is abelian.
    
    The following lemma provides us with such subgroups.
    
    \begin{lmm}\label{lmm_gpg1r1gnrn_caract}
        Let $G=G_1\ast\cdots\ast G_n$ be a free product of groups that are freely indecomposable and not isomorphic to $\mathbb{Z}$. Choose integers $r_1,\dots,r_n\in\mathbb{Z}$ such that $r_i=r_j$ as soon as $G_i\cong G_j$. Then the subgroup $N=G'G_1^{r_1}\cdots G_n^{r_n}$ is characteristic in $G$.
    \end{lmm}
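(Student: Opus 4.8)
The plan is to push everything down to the abelianisation $G/G'$, where the subgroup $N$ becomes transparent. Since the derived subgroup $G'$ is characteristic, every $\alpha\in\Aut(G)$ satisfies $\alpha(G')=G'$ and hence descends to an automorphism $\bar\alpha$ of $G^{\mathrm{ab}}=G/G'$. As $N$ contains $G'$, it is exactly the preimage of its image $N/G'$ under the projection $\pi:G\twoheadrightarrow G^{\mathrm{ab}}$, and the relation $\pi\circ\alpha=\bar\alpha\circ\pi$ gives $\alpha(N)=\pi^{-1}\!\left(\bar\alpha(N/G')\right)$. Thus $\alpha(N)=N$ holds if and only if $\bar\alpha$ fixes $N/G'$, and the whole statement reduces to proving that $N/G'$ is invariant under every automorphism of $G^{\mathrm{ab}}$ of the form $\bar\alpha$.

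First I would identify $N/G'$ concretely. The free product decomposition yields a direct sum splitting $G^{\mathrm{ab}}=\bigoplus_{i=1}^n G_i^{\mathrm{ab}}$, where each $G_i^{\mathrm{ab}}=G_i/G_i'$ is embedded as a direct summand through the retractions $G\to G_i$ (so that $G_i\cap G'=G_i'$). Under $\pi$, the image of the subgroup $G_i^{r_i}$ generated by the $r_i$-th powers of elements of $G_i$ is precisely $r_iG_i^{\mathrm{ab}}$, the subgroup of $r_i$-th multiples. Hence $N/G'=\bigoplus_{i=1}^n r_iG_i^{\mathrm{ab}}$, a very manageable subgroup of $\bigoplus_i G_i^{\mathrm{ab}}$.

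Next I would bring in Grushko's Theorem to control $\bar\alpha$. Since $G=G_1\ast\cdots\ast G_n$ is a Grushko decomposition, the uniqueness part provides a permutation $\sigma\in\mathfrak{S}_n$ and elements $\gamma_i\in G$ with $\alpha(G_i)=\gamma_iG_{\sigma(i)}\gamma_i^{-1}$; in particular $G_i\cong G_{\sigma(i)}$. Passing to $G^{\mathrm{ab}}$ kills the conjugating elements, so $\bar\alpha$ carries the summand $G_i^{\mathrm{ab}}$ isomorphically onto $G_{\sigma(i)}^{\mathrm{ab}}$, whence $\bar\alpha\big(r_iG_i^{\mathrm{ab}}\big)=r_iG_{\sigma(i)}^{\mathrm{ab}}$. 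Reindexing, this gives $\bar\alpha(N/G')=\bigoplus_i r_iG_{\sigma(i)}^{\mathrm{ab}}=\bigoplus_j r_{\sigma^{-1}(j)}G_j^{\mathrm{ab}}$.

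The decisive final step is to compare this with $N/G'=\bigoplus_j r_jG_j^{\mathrm{ab}}$. Because $\sigma(i)=j$ forces $G_i\cong G_j$, the standing hypothesis that $r_i=r_j$ whenever $G_i\cong G_j$ yields $r_{\sigma^{-1}(j)}=r_j$ for every $j$. Each summand therefore matches, so $\bar\alpha(N/G')=N/G'$ and $\alpha(N)=N$. I expect the only delicate point to be the bookkeeping in these last two steps — ensuring that the permutation of summands induced by $\sigma$ is exactly absorbed by the compatibility condition on the $r_i$ — while the free-product abelianisation computation and the appeal to Grushko's uniqueness are routine.
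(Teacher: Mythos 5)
Your proof is correct, and it rests on exactly the same key inputs as the paper's: the fact that $G'$ is characteristic, Grushko's uniqueness statement producing $\sigma\in\mathfrak{S}_n$ and elements $\gamma_i$ with $\alpha\left(G_i\right)=\gamma_iG_{\sigma(i)}\gamma_i^{-1}$, and the compatibility hypothesis giving $r_i=r_{\sigma(i)}$. The packaging differs, though. The paper stays inside $G$ and disposes of the conjugators with the commutator identity $\gamma g\gamma^{-1}=\left[\gamma,g\right]g$, obtaining $\varphi\left(G_i^{r_i}\right)=\gamma_iG_{\sigma(i)}^{r_i}\gamma_i^{-1}\subset G'G_{\sigma(i)}^{r_{\sigma(i)}}\subset N$ and hence $\varphi(N)\subset N$ in a few lines (equality then follows by applying the same inclusion to $\varphi^{-1}$). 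You instead quotient by $G'$ first, so that conjugation dies automatically, and you identify $N/G'=\bigoplus_i r_iG_i^{\mathrm{ab}}$ inside $G^{\mathrm{ab}}=\bigoplus_i G_i^{\mathrm{ab}}$; invariance then becomes a transparent matching of summands via $r_{\sigma^{-1}(j)}=r_j$. Your route costs a little more setup — the direct-sum splitting of $G^{\mathrm{ab}}$, the identity $G_i\cap G'=G_i'$, and the reduction $N=\pi^{-1}\left(N/G'\right)$, all of which you justify correctly — but it buys the equality $\alpha(N)=N$ in one step rather than as an afterthought, and it makes the structure of $N/G'$ explicit, which is in fact the same structure underlying the Galois group $G/N$ exploited later in the paper. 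The paper's computation is shorter; both arguments are complete.
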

    \begin{proof}
        Let $\varphi\in\Aut(G)$. By Grushko's Theorem, there is a permutation $\sigma\in\mathfrak{S}_n$ and elements $\gamma_1,\dots,\gamma_n\in G$ such that\[\varphi\left(G_i\right)=\gamma_iG_{\sigma(i)}\gamma_i^{-1}.\]Note that $r_i=r_{\sigma(i)}$ for all $i$ because $G_i\cong G_{\sigma(i)}$. We have\[\varphi\left(G'\right)\subset G'\subset N\]because $G'$ is characteristic in $G$. Moreover,\[\varphi\left(G_i^{r_i}\right)=\gamma_iG_{\sigma(i)}^{r_i}\gamma_i^{-1}\subset\left[\gamma_i,G_{\sigma(i)}^{r_i}\right]G_{\sigma(i)}^{r_i}\subset G'G_{\sigma(i)}^{r_{\sigma(i)}}\subset N.\]Hence, $\varphi(N)\subset\varphi\left(G'\right)\varphi\left(G_1^{r_1}\right)\cdots\varphi\left(G_n^{r_n}\right)\subset N$.
    \end{proof}
    
    \paragraph{}
    The case where the factors $G_i$ are abelian is particularly nice because of the following lemma.
    
    \begin{lmm}\label{lmm_sousgp_deriv_libre}
        Let $G=G_1\ast\cdots\ast G_n$ be a free product of abelian groups. If $A$ is a Bass-Serre tree of the free product, then the commutator subgroup $G'$ acts freely on $A$. In particular, $G'$ is free.
    \end{lmm}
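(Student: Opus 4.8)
The plan is to show that every nontrivial element of $G'$ acts without fixed points on $A$, and then to invoke the converse direction of Bass-Serre theory, according to which a group acting freely (and without inversion) on a tree is free.

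First I would recall the structure of a Bass-Serre tree of the free product: as a Grushko $G$-tree, $A$ has trivial edge stabilizers, and each nontrivial vertex stabilizer is a conjugate $gG_ig^{-1}$ of one of the free factors. An element of $G$ acting on a tree is either hyperbolic, hence fixed-point-free, or elliptic, in which case it fixes a vertex and therefore lies in some vertex stabilizer. Since the edge stabilizers are trivial and the action is without inversion, it follows that $G'$ acts freely on $A$ if and only if $G'\cap gG_ig^{-1}=1$ for every $g\in G$ and every $i\in\{1,\dots,n\}$.

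Next, since $G'$ is normal in $G$, I would observe that $G'\cap gG_ig^{-1}=g\left(G'\cap G_i\right)g^{-1}$, so it suffices to check that $G'\cap G_i=1$ for each $i$. This is the crux of the argument, and it is here that the hypothesis that the factors are abelian enters. Because each $G_i$ is abelian, the abelianisation $G/G'$ is the direct sum $G_1\oplus\cdots\oplus G_n$, and under this identification the composite $G_i\hookrightarrow G\twoheadrightarrow G/G'$ is the canonical inclusion of the $i$-th summand. This map is injective, so $G_i\cap G'=\Ker\left(G_i\to G/G'\right)=1$, as desired.

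Having shown that $G'$ acts freely on the tree $A$, I would conclude that $G'$ is free by the fundamental theorem on groups acting freely on trees (see \cite{serre}). The main obstacle is really the single computation $G_i\cap G'=1$: for a non-abelian factor one would instead find $G_i\cap G'=G_i'\neq1$, and any nontrivial element of $G_i'$ would fix the vertex stabilised by $G_i$, so freeness would fail. Thus the heart of the proof is the observation that abelianising $G$ does not collapse any nontrivial element lying inside a single factor, which is exactly what the abelianness of the factors guarantees.
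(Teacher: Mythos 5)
Your proposal is correct and follows essentially the same route as the paper: both reduce freeness of the $G'$-action to showing that $G'$ meets every vertex stabiliser $\gamma G_i\gamma^{-1}$ trivially, and both settle this by the same key computation, namely that the abelianisation of $G$ is $G_1\times\cdots\times G_n$ and each factor injects into it, so $G_i\cap G'=1$. Your additional remarks (the elliptic/hyperbolic dichotomy and the normality of $G'$ reducing conjugates to the factors themselves) merely spell out steps the paper leaves implicit.
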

    \begin{proof}
        If $v$ is a vertex of $A$, then its stabiliser under $G$ is given by\[\Stab_G(v)=\gamma G_i\gamma^{-1},\]with $\gamma\in G$ and $i\in\left\{1,\dots,n\right\}$. Since the abelianisation of $G$ is the direct product $G_1\times\cdots\times G_n$, the stabiliser of $v$ under $G'$ is given by\[\Stab_{G'}(v)=\Stab_G(v)\cap G'=\gamma G_i\gamma^{-1}\cap\Ker\left(G\rightarrow G_1\times\cdots\times G_n\right)=1.\]It follows that $G'$ acts freely on $A$, so $G'$ is free.
    \end{proof}
    
    \paragraph{}
    Hence, if the factors are abelian, then the derived subgroup $G'$ is free. For the sake of simplicity, let us assume for the moment that the chosen characteristic subgroup $N=G'G_1^{r_1}\cdots G_n^{r_n}$ is actually equal to $G'$ by taking $G_i$ finite abelian and $r_i=\left|G_i\right|$.
    
    \begin{figure}[hbt]
        \centering
        \begin{subfigure}[t]{15em}
            \centering
            \begin{tikzpicture}[every node/.style={draw,circle,fill,inner sep=1pt},scale=0.6]
                \node[fill=none,draw=none] at (5,7) {$\hat{X}_1=G_1'\backslash A_1$};
                \node[fill=none,draw=none] at (3,-4.5) {$\mathbb{X}_1=G_1\backslash A_1$};
                
                \draw node[circle,fill=white] (p0) at (5,2.625) {};
                \draw node[circle,fill=white] (p1) at (5,4.25) {};
                \draw node[circle,fill=white] (p2) at (5,5.875) {};
                \draw node[circle,fill=white] (q) at (2.8,5) {};
                \draw node[circle,fill=white] (r) at (5.8,5) {};
                \draw node (x0) at (3.5,2.625) {};
                \draw node (x1) at (3.5,4.25) {};
                \draw node (x2) at (3.5,5.875) {};
                \draw node (y0) at (6.5,2.625) {};
                \draw node (y1) at (6.5,4.25) {};
                \draw node (y2) at (6.5,5.875) {};

                \draw [thick, color=ForestGreen] (p0) -- (x0);
                \draw [thick, color=ForestGreen] (p1) -- (x1);
                \draw [thick, color=ForestGreen] (p2) -- (x2);
                \draw [thick, color=ForestGreen] (p0) -- (y0);
                \draw [thick, color=ForestGreen] (p1) -- (y1);
                \draw [thick, color=ForestGreen] (p2) -- (y2);

                \draw [thick, color=Red] (q) -- (x0);
                \draw [thick, color=Red] (q) -- (x1);
                \draw [thick, color=Red] (q) -- (x2);
                \draw [thick, color=Red] (r) -- (y0);
                \draw [thick, color=Red] (r) -- (y1);
                \draw [thick, color=Red] (r) -- (y2);
                
                \coordinate (f1) at (5,1);
                \coordinate (f2) at (5,-0.5);
                \draw [->] (f1) -- (f2);
                
                \draw node[label=right:{\small$v_0$}] (bn) at (5,-4.5) {};
                \draw node[circle,fill=white, label=above:{\small$\mathbb{Z}/3$}] (brouge) at (5,-3.5) {};
                \draw node[circle,fill=white, label=below:{\small$\mathbb{Z}/2$}] (bvert) at (5,-5.5) {};
                \draw [thick, color=Red] (bn) -- (brouge);
                \draw [thick, color=ForestGreen] (bn) -- (bvert);
            \end{tikzpicture}
            \caption{$G_1=\mathbb{Z}/2\ast\mathbb{Z}/3$}
        \end{subfigure}
        \quad
        \begin{subfigure}[t]{20em}
            \centering
            \begin{tikzpicture}[every node/.style={draw,circle,fill,inner sep=1pt},scale=0.6]
                \node[fill=none,draw=none] at (5,8.5) {$\hat{X}_2=G_2'\backslash A_2$};
                \node[fill=none,draw=none] at (3.1,-4.5) {$\mathbb{X}_2=G_2\backslash A_2$};
            
                \draw node (p000) at (0,0) {};
                \draw node (p100) at (2.5,0) {};
                \draw node (p200) at (5,0) {};
                \draw node (p300) at (7.5,0) {};
                \draw node (p010) at (1,1) {};
                \draw node (p110) at (3.5,1) {};
                \draw node (p210) at (6,1) {};
                \draw node (p310) at (8.5,1) {};
                \draw node (p020) at (2,2) {};
                \draw node (p120) at (4.5,2) {};
                \draw node (p220) at (7,2) {};
                \draw node (p320) at (9.5,2) {};
                \draw node (p001) at (0,5) {};
                \draw node (p101) at (2.5,5) {};
                \draw node (p201) at (5,5) {};
                \draw node (p301) at (7.5,5) {};
                \draw node (p011) at (1,6) {};
                \draw node (p111) at (3.5,6) {};
                \draw node (p211) at (6,6) {};
                \draw node (p311) at (8.5,6) {};
                \draw node (p021) at (2,7) {};
                \draw node (p121) at (4.5,7) {};
                \draw node (p221) at (7,7) {};
                \draw node (p321) at (9.5,7) {};
                
                \draw node[circle,fill=white] (qvert00) at (0,2.5) {};
                \draw [thick, color=ForestGreen] (qvert00) -- (p001);
                \draw node[circle,fill=white] (qvert10) at (2.5,2.5) {};
                \draw [thick, color=ForestGreen] (qvert10) -- (p101);
                \draw node[circle,fill=white] (qvert20) at (5,2.5) {};
                \draw [thick, color=ForestGreen] (qvert20) -- (p201);
                \draw node[circle,fill=white] (qvert30) at (7.5,2.5) {};
                \draw [thick, color=ForestGreen] (qvert30) -- (p301);
                
                \draw node[circle,fill=white] (qvert01) at (1,3.5) {};
                \draw [thick, color=ForestGreen] (qvert01) -- (p011);
                \draw node[circle,fill=white] (qvert11) at (3.5,3.5) {};
                \draw [thick, color=ForestGreen] (qvert11) -- (p111);
                \draw node[circle,fill=white] (qvert21) at (6,3.5) {};
                \draw [thick, color=ForestGreen] (qvert21) -- (p211);
                \draw node[circle,fill=white] (qvert31) at (8.5,3.5) {};
                \draw [thick, color=ForestGreen] (qvert31) -- (p311);
                
                \draw node[circle,fill=white] (qvert02) at (2,4.5) {};
                \draw [thick, color=ForestGreen] (qvert02) -- (p021);
                \draw node[circle,fill=white] (qvert12) at (4.5,4.5) {};
                \draw [thick, color=ForestGreen] (qvert12) -- (p121);
                \draw node[circle,fill=white] (qvert22) at (7,4.5) {};
                \draw [thick, color=ForestGreen] (qvert22) -- (p221);
                \draw node[circle,fill=white] (qvert32) at (9.5,4.5) {};
                \draw [thick, color=ForestGreen] (qvert32) -- (p321);
                
                \draw node[circle,fill=white] (qbleu00) at (3.75,0.5) {};
                \draw [thick, color=Blue] (qbleu00) -- (p000);
                \draw [thick, color=Blue] (qbleu00) -- (p100);
                \draw [thick, color=Blue] (qbleu00) -- (p200);
                \draw [thick, color=Blue] (qbleu00) -- (p300);
                \draw node[circle,fill=white] (qbleu10) at (4.75,1.5) {};
                \draw [thick, color=Blue] (qbleu10) -- (p010);
                \draw [thick, color=Blue] (qbleu10) -- (p110);
                \draw [thick, color=Blue] (qbleu10) -- (p210);
                \draw [thick, color=Blue] (qbleu10) -- (p310);
                \draw node[circle,fill=white] (qbleu20) at (5.75,2.5) {};
                \draw [thick, color=Blue] (qbleu20) -- (p020);
                \draw [thick, color=Blue] (qbleu20) -- (p120);
                \draw [thick, color=Blue] (qbleu20) -- (p220);
                \draw [thick, color=Blue] (qbleu20) -- (p320);
                
                \draw node[circle,fill=white] (qbleu01) at (3.75,5.5) {};
                \draw [thick, color=Blue] (qbleu01) -- (p001);
                \draw [thick, color=Blue] (qbleu01) -- (p101);
                \draw [thick, color=Blue] (qbleu01) -- (p201);
                \draw [thick, color=Blue] (qbleu01) -- (p301);
                \draw node[circle,fill=white] (qbleu11) at (4.75,6.5) {};
                \draw [thick, color=Blue] (qbleu11) -- (p011);
                \draw [thick, color=Blue] (qbleu11) -- (p111);
                \draw [thick, color=Blue] (qbleu11) -- (p211);
                \draw [thick, color=Blue] (qbleu11) -- (p311);
                \draw node[circle,fill=white] (qbleu21) at (5.75,7.5) {};
                \draw [thick, color=Blue] (qbleu21) -- (p021);
                \draw [thick, color=Blue] (qbleu21) -- (p121);
                \draw [thick, color=Blue] (qbleu21) -- (p221);
                \draw [thick, color=Blue] (qbleu21) -- (p321);
                
                \draw node[circle,fill=white] (qrouge00) at (0.5,1.5) {};
                \draw [thick, color=Red] (qrouge00) -- (p000);
                \draw [thick, color=Red] (qrouge00) -- (p010);
                \draw [thick, color=Red] (qrouge00) -- (p020);
                \draw node[circle,fill=white] (qrouge10) at (3,1.5) {};
                \draw [thick, color=Red] (qrouge10) -- (p100);
                \draw [thick, color=Red] (qrouge10) -- (p110);
                \draw [thick, color=Red] (qrouge10) -- (p120);
                \draw node[circle,fill=white] (qrouge20) at (5.5,1.5) {};
                \draw [thick, color=Red] (qrouge20) -- (p200);
                \draw [thick, color=Red] (qrouge20) -- (p210);
                \draw [thick, color=Red] (qrouge20) -- (p220);
                \draw node[circle,fill=white] (qrouge30) at (8,1.5) {};
                \draw [thick, color=Red] (qrouge30) -- (p300);
                \draw [thick, color=Red] (qrouge30) -- (p310);
                \draw [thick, color=Red] (qrouge30) -- (p320);
                
                \draw node[circle,fill=white] (qrouge01) at (0.5,6.5) {};
                \draw [thick, color=Red] (qrouge01) -- (p001);
                \draw [thick, color=Red] (qrouge01) -- (p011);
                \draw [thick, color=Red] (qrouge01) -- (p021);            
                \draw node[circle,fill=white] (qrouge11) at (3,6.5) {};
                \draw [thick, color=Red] (qrouge11) -- (p101);
                \draw [thick, color=Red] (qrouge11) -- (p111);
                \draw [thick, color=Red] (qrouge11) -- (p121);            
                \draw node[circle,fill=white] (qrouge21) at (5.5,6.5) {};
                \draw [thick, color=Red] (qrouge21) -- (p201);
                \draw [thick, color=Red] (qrouge21) -- (p211);
                \draw [thick, color=Red] (qrouge21) -- (p221);            
                \draw node[circle,fill=white] (qrouge31) at (8,6.5) {};
                \draw [thick, color=Red] (qrouge31) -- (p301);
                \draw [thick, color=Red] (qrouge31) -- (p311);
                \draw [thick, color=Red] (qrouge31) -- (p321);
                
                \draw [thick, color=ForestGreen] (qvert00) -- (p000);
                \draw [thick, color=ForestGreen] (qvert10) -- (p100);
                \draw [thick, color=ForestGreen] (qvert20) -- (p200);
                \draw [thick, color=ForestGreen] (qvert30) -- (p300);
                \draw [thick, color=ForestGreen] (qvert01) -- (p010);
                \draw [thick, color=ForestGreen] (qvert11) -- (p110);
                \draw [thick, color=ForestGreen] (qvert21) -- (p210);
                \draw [thick, color=ForestGreen] (qvert31) -- (p310);
                \draw [thick, color=ForestGreen] (qvert02) -- (p020);
                \draw [thick, color=ForestGreen] (qvert12) -- (p120);
                \draw [thick, color=ForestGreen] (qvert22) -- (p220);
                \draw [thick, color=ForestGreen] (qvert32) -- (p320);
                
                \coordinate (f1) at (5,-1);
                \coordinate (f2) at (5,-2.5);
                \draw [->] (f1) -- (f2);
                
                \draw node[label=above:{\small$v_0$}] (bn) at (5,-4) {};
                \draw node[circle,fill=white, label=below:{\small$\mathbb{Z}/4$}] (bbleu) at (5,-5.5) {};
                \draw node[circle,fill=white, label=above right:{\small$\mathbb{Z}/3$}] (brouge) at (6.3,-3.25) {};
                \draw node[circle,fill=white, label=above left:{\small$\mathbb{Z}/2$}] (bvert) at (3.7,-3.25) {};
                \draw [thick, color=Red] (bn) -- (brouge);
                \draw [thick, color=Blue] (bn) -- (bbleu);
                \draw [thick, color=ForestGreen] (bn) -- (bvert);
            \end{tikzpicture}
            \caption{$G_2=\mathbb{Z}/2\ast\mathbb{Z}/3\ast\mathbb{Z}/4$}
        \end{subfigure}
        \caption{Examples of maximal abelian coverings associated with free products}\label{fig_rev_ab}
    \end{figure}
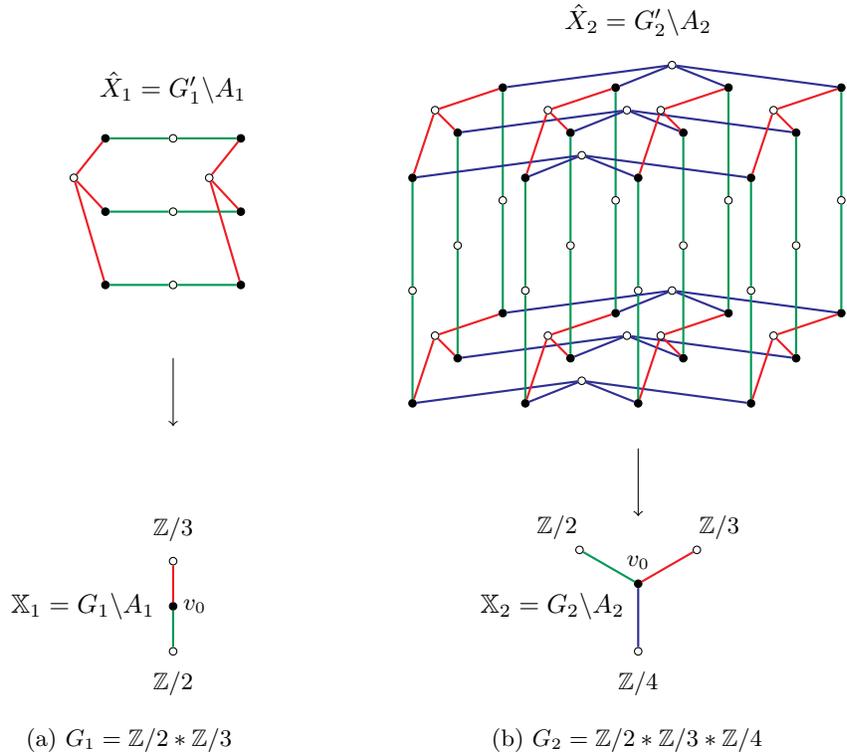
    
    Let $A$ be the Bass-Serre tree of $G$ corresponding to a star-shaped tree of groups $\mathbb{X}$ with $n$ vertices labelled by $G_1,\dots,G_n$, all linked to a common vertex $v_0$ with trivial stabiliser (see Figure \ref{fig_rev_ab}). The quotient graph $G'\backslash A$ can be seen as a characteristic abelian covering of $\mathbb{X}$.
    
    It is worthwhile thinking for a moment about the geometry of $\hat{X}=G'\backslash A$. Indeed, Bridson and Vogtmann's construction in \cite{bv} is done from the topological viewpoint, and it was this viewpoint that gave us the intuition to generalise their method --- even though our proofs are ultimately written in purely algebraic terms.
    
    To visualise the graph $\hat{X}$, one may first focus on the fibre of the unlabelled vertex $v_0$ of $\mathbb{X}=G\backslash A$ (in black on Figure \ref{fig_rev_ab}). The group $\Deck(p)\cong G/G'\cong G_1\times\cdots\times G_n$ acts simply transitively on this fibre, so the latter is in bijection with $G_1\times\cdots\times G_n$. Since $G'$ acts freely on $A$, we understand the local geometry of $\hat{X}=G'\backslash A$: each vertex of the fibre of $v_0$ is the endpoint of one edge for each factor $G_i$. The other endpoint of such an edge is in the fibre of the vertex labelled by $G_i$, which has degree $\left|G_i\right|$. We know in addition that, under the action of $\Deck(p)\cong G_1\times\cdots\times G_n$ on $\hat{X}$, the factor $G_i\leq\Deck(p)$ stabilises the whole fibre of the vertex of $\mathbb{X}$ labelled by $G_i$; this allows one to understand the action of $\Deck(p)$ on $\hat{X}$. We give two examples in Figure \ref{fig_rev_ab}.
    
    This picture of characteristic abelian covers $\hat{X}\rightarrow\mathbb{X}$ may be helpful for intuition and to understand the similarity between our construction and that of Bridson and Vogtmann. The interested reader may want to think about the topological interpretation of our proof: it provides a way to lift homotopy equivalences of $\mathbb{X}$ to $\hat{X}$.
    
\section{Proof of Theorem \ref{thrm_plong_outg}}\label{sect_constr_embed_1}
    
    \subsection{Generators and relations of \texorpdfstring{$\Out(G)$}{Out(G)}}\label{subsec_gen_rel_outg}
    
    \paragraph{}
    To split the projection $\Aut(G)/N\twoheadrightarrow\Out(G)$, we will use a presentation of $\Out(G)$, as in Bridson and Vogtmann's paper \cite{bv}. We use the work of Fuchs-Rabinovich \cite{fr1}, who gives a presentation of $\Aut(G)$ when $G$ is a free product. It will then suffice to add relations corresponding to inner automorphisms to get a presentation of $\Out(G)$.
    
    If $G$ has no free factor, Fuchs-Rabinovich \cite{fr1} defines three kinds of generators for $\Aut(G)$.
    
    \begin{nota}\label{nota_gen_fr}
        Let $G=G_1\ast\cdots\ast G_n$ be a free product of groups that are freely indecomposable and not isomorphic to $\mathbb{Z}$.
        \begin{description}
            \item[\textnormal{Factor automorphisms:}] For $i\in\left\{1,\dots,n\right\}$, and $\bar{\varphi}_i\in\Aut\left(G_i\right)$, we set
            \begin{align*}
                \varphi_i:g_i\in G_i&\mapsto\bar{\varphi}_i\left(g_i\right),\\
                g_j\in G_j&\mapsto g_j\quad\textrm{if $j\neq i$}.
            \end{align*}
            We write $\Phi_i=\left\{\varphi_i,\:\bar{\varphi}_i\in\Aut\left(G_i\right)\right\}$. Elements of $\Phi_i$ are called \emph{automorphisms of the factor $G_i$} and we write $\Phi=\bigcup_{i}\Phi_i$.
            \item[\textnormal{Permutation automorphisms:}] Given $i,j\in\left\{1,\dots,n\right\}$ such that $G_i\cong G_j$, we pick an isomorphism $\bar{\omega}_{ij}:G_i\rightarrow G_j$. We assume that choices have been made in such a way that $\bar{\omega}_{ii}=\id_{G_i}$ and $\bar{\omega}_{jk}\bar{\omega}_{ij}=\bar{\omega}_{ik}$. If $i\neq j$, we consider
            \begin{align*}
                \omega_{ij}:g_i\in G_i&\mapsto\bar{\omega}_{ij}\left(g_i\right),\\
                g_j\in G_j&\mapsto\bar{\omega}_{ji}\left(g_j\right),\\
                g_k\in G_k&\mapsto g_k\quad\textrm{if $k\neq i,j$}.
            \end{align*}
            Note that $\omega_{ij}^2=\id_G$. We set $\Omega=\left\{\omega_{ij},\:i\neq j,\:G_i\cong G_j\right\}$. Elements of $\Omega$ are called \emph{permutation automorphisms}.
            \item[\textnormal{Dehn twists:}] Given $i\neq j\in\left\{1,\dots,n\right\}$ and $\gamma_i\in G_i$, we define
            \begin{align*}
                \alpha_{ij}^{\left(\gamma_i\right)}:g_j\in G_j&\mapsto\gamma_i^{-1}g_j\gamma_i,\\
                g_k\in G_k&\mapsto g_k\quad\textrm{if $k\neq j$}.
            \end{align*}
            We set $A_{ij}=\left\{\alpha_{ij}^{\left(\gamma_i\right)},\:\gamma_i\in G_i\right\}$ and $A=\bigcup_{i\neq j}A_{ij}$. Elements of $A$ are called \emph{Dehn twists}.
        \end{description}
    \end{nota}
    
    A system of relations for $\Aut(G)$ is given by the following theorem. Observe that we consider the groups $\Aut\left(G_i\right)\cong\Phi_i$ and $G_i\cong A_{ij}$ as given.
    
    \begin{thrm}[Fuchs-Rabinovich \cite{fr1}]\label{thrm_pres_autg}
        Let $G=G_1\ast\cdots\ast G_n$ be a free product of groups that are freely indecomposable and not isomorphic to $\mathbb{Z}$. Then $\Aut(G)$ is generated by factor automorphisms, permutation automorphisms and Dehn twists. Moreover, the following relations give a presentation of $\Aut(G)$:
        \begin{enumerate}[label=\textnormal{(\arabic*)}]
            \item $\varphi_i\varphi_j=\varphi_j\varphi_i$ for $\varphi_i\in\Phi_i$, $\varphi_j\in\Phi_j$, $i\neq j$.\label{thrm_pres_autg_r1}
            \item $\varphi_i\alpha_{jk}^{\left(\gamma_j\right)}=\alpha_{jk}^{\left(\gamma_j\right)}\varphi_i$ for $\varphi_i\in\Phi_i$, $\gamma_j\in G_j$, $i\neq j$.\label{thrm_pres_autg_r2}
            \item $\varphi_i\alpha_{ik}^{\left(\gamma_i\right)}=\alpha_{ik}^{\left(\varphi_i\gamma_i\right)}\varphi_i$ for $\varphi_i\in\Phi_i$, $\gamma_i\in G_i$.\label{thrm_pres_autg_r3}
            \item $\alpha_{ij}^{\left(\gamma_i\right)}\alpha_{k\ell}^{\left(\gamma_k\right)}=\alpha_{k\ell}^{\left(\gamma_k\right)}\alpha_{ij}^{\left(\gamma_i\right)}$ for $\gamma_i\in G_i$, $\gamma_k\in G_k$, $k\neq j$ and $\ell\neq i,j$.\label{thrm_pres_autg_r4}
            \item $\alpha_{ki}^{\left(\gamma_k\right)}\alpha_{ij}^{\left(\gamma_i\right)}\left(\alpha_{ki}^{\left(\gamma_k\right)}\right)^{-1}=\left(\alpha_{kj}^{\left(\gamma_k\right)}\right)^{-1}\alpha_{ij}^{\left(\gamma_i\right)}\alpha_{kj}^{\left(\gamma_k\right)}$ for $\gamma_i\in G_i$, $\gamma_k\in G_k$.\label{thrm_pres_autg_r5}\footnote{A misprint in \cite{fr1} was corrected in \cite[\S{} 4]{gilbert}.}
            \item $\omega_{ij}\varphi_i=\varphi_j\omega_{ij}$ if $\varphi_i\in\Phi_i$ and $\varphi_j\in\Phi_j$ are related by $\left(\omega_{ij}\varphi_i\right)_{\left|G_i\right.}=\left(\varphi_j\omega_{ij}\right)_{\left|G_i\right.}$.\label{thrm_pres_autg_r6}
            \item $\omega_{ij}\varphi_k=\varphi_k\omega_{ij}$ for $k\neq i,j$.\label{thrm_pres_autg_r7}
            \item $\omega_{ij}\alpha_{ij}^{\left(\gamma_i\right)}=\alpha_{ji}^{\left(\omega_{ij}\gamma_i\right)}\omega_{ij}$ for $\gamma_i\in G_i$.\label{thrm_pres_autg_r8}
            \item $\omega_{ij}\alpha_{ik}^{\left(\gamma_i\right)}=\alpha_{jk}^{\left(\omega_{ij}\gamma_i\right)}\omega_{ij}$ for $\gamma_i\in G_i$.\label{thrm_pres_autg_r9}
            \item $\omega_{ij}\alpha_{ki}^{\left(\gamma_k\right)}=\alpha_{kj}^{\left(\gamma_k\right)}\omega_{ij}$ for $\gamma_k\in G_k$.\label{thrm_pres_autg_r10}
            \item $\omega_{ij}\alpha_{k\ell}^{\left(\gamma_k\right)}=\alpha_{k\ell}^{\left(\gamma_k\right)}\omega_{ij}$ for $\gamma_k\in G_k$, $k,\ell\neq i,j$.\label{thrm_pres_autg_r11}
        \end{enumerate}
    \end{thrm}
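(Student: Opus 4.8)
The plan is to prove the presentation in two stages: first that factor automorphisms, permutation automorphisms and Dehn twists generate $\Aut(G)$, and then that the relations (1)--(11) are complete. For the generation statement, the essential input is the uniqueness half of Grushko's Theorem. Any $\alpha\in\Aut(G)$ carries the given free product decomposition to another one, so there are a permutation $\sigma\in\mathfrak{S}_n$ with $G_i\cong G_{\sigma(i)}$ and elements $\gamma_i\in G$ such that $\alpha(G_i)=\gamma_iG_{\sigma(i)}\gamma_i^{-1}$. First I would compose $\alpha$ with the permutation automorphism realising $\sigma^{-1}$, which is a product of elements of $\Omega$, to reduce to $\sigma=\id$. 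Then $\alpha(G_i)=\gamma_iG_i\gamma_i^{-1}$, and it remains to kill the conjugators $\gamma_i$ using Dehn twists: writing each $\gamma_i$ in free-product normal form and inducting on total syllable length, one absorbs a leading syllable $g_j\in G_j$ by composing with $\alpha_{ji}^{(g_j)}$, strictly shortening the conjugators. Once all the $\gamma_i$ are trivial, $\alpha$ preserves every $G_i$ setwise and is the product of the factor automorphisms $\varphi_i\in\Phi_i$ induced by its restrictions. This stage is a normal-form bookkeeping argument.

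The real content is the completeness of the relations. Here I would use the standard machinery that reads a presentation off an action on a simply connected complex: if a group acts on a simply connected $2$-complex, it is presented by generators coming from cell stabilisers together with elements crossing the edges of a fundamental domain, subject to the relations internal to the stabilisers, the edge-compatibility relations, and one relation for each orbit of $2$-cells. Concretely, I would build a complex $X$ whose vertices are the marked decompositions of $G$ as a free product of conjugates of the $G_i$, whose edges record elementary moves (a single Dehn twist, factor automorphism or permutation), and whose $2$-cells fill in the evident interactions between two such moves. Under this dictionary each family (1)--(11) is the boundary of a $2$-cell: the commutation relations (1), (2), (4), (7), (11) come from squares between moves with disjoint support, the carrying relations (3), (8), (9), (10) from triangles where a twist is pushed through a factor or permutation automorphism, and (5) from the essential hexagonal relation among nested Dehn twists. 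The theorem then amounts to the assertion that $X$ is simply connected, equivalently that these squares, triangles and hexagons account for \emph{all} the interactions.

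An alternative, purely combinatorial route is \emph{peak reduction} in the style of Whitehead and McCool: one equips $\Aut(G)$ with a complexity, for instance the total length of the images of a fixed transversal of the factors, and proves that any relator can be rewritten modulo (1)--(11) into a form in which this complexity has no interior local maximum, after which peak-free identities reduce directly. Either way, the main obstacle is exactly this last point --- proving simple connectivity of $X$, or equivalently that every peak can be lowered using only the listed relations. By comparison the generation step and the verification that (1)--(11) genuinely hold in $\Aut(G)$ are routine; the entire difficulty lies in ruling out further relations, which requires a complete and delicate case analysis of how any two of the elementary moves can interact. This is precisely the combinatorial computation carried out by Fuchs-Rabinovich and later recast (and corrected) by Gilbert.
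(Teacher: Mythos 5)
First, a remark on what you are being compared against: the paper does not prove this theorem at all. It is imported verbatim from Fuchs-Rabinovich \cite{fr1}, with a footnote recording that a misprint in relation \ref{thrm_pres_autg_r5} was corrected by Gilbert \cite[\S{} 4]{gilbert}; the paper's role is only to \emph{use} the presentation (in Corollary \ref{thrm_pres_outg} and Proposition \ref{prop_scindement_autgn_outg}). So the question is whether your sketch would stand as an independent proof, and it does not. Your generation step is essentially sound: invoking the uniqueness half of Grushko's Theorem, normalising the permutation $\sigma$ by elements of $\Omega$, and then absorbing the conjugators $\gamma_i$ syllable by syllable with Dehn twists $\alpha_{ji}^{\left(g_j\right)}$ is the standard argument, and the bookkeeping you describe can be made precise (one should note that inner automorphisms are covered, since $\ad\left(\gamma_i^{-1}\right)$ restricted appropriately is a product of Dehn twists and a factor automorphism, which is exactly what relation \ref{cons_pres_outg_12} of the corollary encodes). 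Likewise, verifying that relations $\ref{thrm_pres_autg_r1}$--$\ref{thrm_pres_autg_r11}$ hold in $\Aut(G)$ is routine, as you say.

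The genuine gap is that the completeness of the relations --- your assertion that the complex $X$ is simply connected, or equivalently that every peak can be lowered using only $\ref{thrm_pres_autg_r1}$--$\ref{thrm_pres_autg_r11}$ --- is the entire content of the theorem, and your proposal explicitly defers it (``the main obstacle is exactly this last point''). Naming the two known routes (a presentation read off an action on a simply connected $2$-complex, in the spirit of what McCullough--Miller later built for free products, or peak reduction à la Whitehead--McCool, which is in fact the mechanism behind Gilbert's treatment) and then citing Fuchs-Rabinovich and Gilbert for the execution is a restatement of the paper's citation, not a proof. The footnoted misprint in relation \ref{thrm_pres_autg_r5} illustrates concretely why this matters: only the full case analysis of how two elementary moves interact --- or the complete peak-reduction induction on your proposed complexity --- can certify the \emph{exact} form of each relation (for instance, on which side of $\alpha_{ij}^{\left(\gamma_i\right)}$ the conjugating twists $\alpha_{kj}^{\left(\gamma_k\right)}$ sit), and your sketch contains no mechanism that would detect such an error. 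As it stands, your text is a correct and well-informed road map to the literature, with the generation statement provable as outlined, but the presentation statement itself remains unproved.
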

    
    \paragraph{}
    To obtain a presentation of $\Out(G)$, it suffices to add relations whose normal closure is the subgroup $\Inn(G)$ of inner automorphisms. To write the presentation, we identify generators of $\Aut(G)$ with their image in $\Out(G)$.
    
    \begin{cons}\label{thrm_pres_outg}
        Let $G=G_1\ast\cdots\ast G_n$ be a free product of groups that are freely indecomposable and not isomorphic to $\mathbb{Z}$. Then $\Out(G)$ is generated by factor automorphisms, permutation automorphisms and Dehn twists. Moreover, relations $\ref{thrm_pres_autg_r1}-\ref{thrm_pres_autg_r11}$ of Theorem \ref{thrm_pres_autg}, together with relation \ref{cons_pres_outg_12} below, give a presentation of $\Out(G)$:
        \begin{enumerate}[label=\textnormal{(\arabic*)}]
            \setcounter{enumi}{-1}
            \item $\varphi_i\prod_{j\neq i}\alpha_{ij}^{\left(\gamma_i\right)}=1$ for $\gamma_i\in G_i$, with $\varphi_i\in\Phi_i$ given by ${\varphi_i}_{\left|G_i\right.}=\ad\left(\gamma_i^{-1}\right)$.\label{cons_pres_outg_12}\qedhere\qed
        \end{enumerate}
    \end{cons}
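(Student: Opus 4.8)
The plan is to derive the presentation of $\Out(G)$ from the Fuchs-Rabinovich presentation of $\Aut(G)$ (Theorem \ref{thrm_pres_autg}) by the standard recipe for presenting a quotient: if a group $\Gamma$ has a presentation $\langle S \mid R\rangle$ and $K \trianglelefteq \Gamma$, then $\Gamma/K$ is presented by $\langle S \mid R \cup R_K\rangle$, where $R_K$ is any set of words in $S$ whose images generate $K$ as a \emph{normal} subgroup of $\Gamma$. Here $\Gamma = \Aut(G)$ and $K = \Inn(G)$, and since $\Inn(G)$ is already normal in $\Aut(G)$, it suffices to exhibit words generating $\Inn(G)$ merely as a subgroup. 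First I would observe that, because $G = G_1 \ast \cdots \ast G_n$ is generated by $\bigcup_i G_i$ and $\ad : G \rightarrow \Aut(G)$ is a homomorphism, the subgroup $\Inn(G) = \ad(G)$ is generated by the inner automorphisms $\ad(\gamma_i)$ with $\gamma_i \in G_i$ and $i \in \{1,\dots,n\}$. Thus it is enough to adjoin to relations \ref{thrm_pres_autg_r1}--\ref{thrm_pres_autg_r11} the relations $\ad(\gamma_i) = 1$, equivalently $\ad(\gamma_i^{-1}) = 1$, for all $i$ and all $\gamma_i \in G_i$, provided each such inner automorphism is first rewritten as a word in the generators.

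The heart of the argument is therefore to express each $\ad(\gamma_i^{-1})$ as a product of factor automorphisms, permutation automorphisms and Dehn twists. I claim that, for $\gamma_i \in G_i$ and for $\varphi_i \in \Phi_i$ determined by ${\varphi_i}_{\left|G_i\right.} = \ad\left(\gamma_i^{-1}\right)$, one has
\[\ad\left(\gamma_i^{-1}\right) = \varphi_i\prod_{j\neq i}\alpha_{ij}^{\left(\gamma_i\right)}\]
in $\Aut(G)$. The product is unambiguous since, for fixed $i$, the Dehn twists $\alpha_{ij}^{\left(\gamma_i\right)}$ commute with one another by relation \ref{thrm_pres_autg_r4}. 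To verify the identity I would simply evaluate both sides on each free factor. On $G_i$ the Dehn twists act trivially while $\varphi_i$ acts by $\ad\left(\gamma_i^{-1}\right)$, so the two sides agree there; on a factor $G_j$ with $j \neq i$ the automorphism $\varphi_i$ acts trivially and only $\alpha_{ij}^{\left(\gamma_i\right)}$ is non-trivial, sending $g_j \mapsto \gamma_i^{-1} g_j \gamma_i$, which is again the action of $\ad\left(\gamma_i^{-1}\right)$ (here one uses $\varphi_i\left(\gamma_i\right) = \gamma_i$ to keep track of the order of composition). Since the two automorphisms agree on each $G_j$, they agree on all of $G$.

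Putting these together, the relation $\ad\left(\gamma_i^{-1}\right) = 1$ in $\Out(G)$ becomes precisely relation \ref{cons_pres_outg_12}, and letting $\gamma_i$ range over all of $G_i$, for each $i$, kills exactly the generating set $\left\{\ad(g) : g \in G_i\right\}$ of $\Inn(G)$ identified above. Hence relations \ref{thrm_pres_autg_r1}--\ref{thrm_pres_autg_r11} together with relation \ref{cons_pres_outg_12} present $\Out(G)$, once each generator of $\Aut(G)$ is identified with its image in the quotient. I expect the only genuine obstacle to be the explicit decomposition recorded in the displayed formula, namely guessing the correct word $\varphi_i\prod_{j\neq i}\alpha_{ij}^{\left(\gamma_i\right)}$ and carrying out the bookkeeping of the conjugating factors; everything else is the formal quotient-presentation principle combined with the fact that the factors $G_i$ generate $G$.
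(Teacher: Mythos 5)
Your proposal is correct and follows exactly the route the paper intends: the paper gives no separate proof of this corollary beyond the remark that it suffices to adjoin relations whose normal closure is $\Inn(G)$, and your argument simply fills in the details — that the $\ad\left(\gamma_i\right)$ with $\gamma_i\in G_i$ generate $\Inn(G)$ since the factors generate $G$, and that $\ad\left(\gamma_i^{-1}\right)=\varphi_i\prod_{j\neq i}\alpha_{ij}^{\left(\gamma_i\right)}$ holds in $\Aut(G)$, verified factor by factor using $\varphi_i\left(\gamma_i\right)=\gamma_i$. Both the quotient-presentation principle and the displayed identity are exactly what the paper's \qed tacitly relies on.
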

    
    \subsection{Construction of the splitting}\label{subsec_relev_eh}
    
    \paragraph{}
    Using the above presentation of $\Out(G)$, we are ready to split the projection $\Aut(G)/N\twoheadrightarrow\Out(G)$.
    
    Let $S$ denote the generating set of $\Out(G)$ introduced above. Observe that each generator $s\in S$ was defined by specifying a representative, which we will write $s_0$, of $s$ in $\Aut(G)$. We write $R\subset F(S)$ for the set of relations of $\Aut(G)$ and $R_1$ for the extra relations of $\Out(G)$. We thus have $\Out(G)=\left<S\mid R\cup R_1\right>$. Given a word $w$ in the free group $F(S)$, we denote by $w_0\in F\left(S_0\right)$ the corresponding word obtained after substituting $s\mapsto s_0$ (with $S_0=\left\{s_0,\:s\in S\right\}$).
    
    We aim to define a splitting of the projection $\Aut(G)/N\twoheadrightarrow\Out(G)$. We need to associate an element $\hat{s}\in\Aut(G)$ with each generator $s\in S$ in such a way that\[\hat{s}^{-1}s_0\in\Inn(G),\]and that for all $r\in R\cup R_1$, the lift $\hat{r}$ of the relation $r$ to $\Aut(G)$ satisfies\[\hat{r}\in\Inn(N).\]If we take $\hat{s}=s_0$ for all $s$, the relations $r\in R$ which hold in $\Aut(G)$ will automatically satisfy $\hat{r}=r_0=1$, but the relations of $R_1$ will not lift trivially.
    
    We thus need to correct the lifts $\hat{s}$ of the different generators $s\in S$ in order to satisfy all the relations of $\Out(G)$. When $G$ is a free group, Bridson and Vogtmann \cite{bv} correct the lifts of the generators corresponding to transvections by a translation along an axis; in other words, they make a component of the Galois group (which splits as a direct product) act on the lifts. This amounts to composing the lifts of some generators by a conjugation. This idea leads to the following proof.
    
    \begin{prop}\label{prop_scindement_autgn_outg}
        Let $G=G_1\ast\cdots\ast G_n$ be a free product of abelian groups that are not isomorphic to $\mathbb{Z}$. For each $i\in\left\{1,\dots,n\right\}$, we pick an integer $r_i\in\mathbb{Z}$ coprime to $n-1$, so that $r_i=r_j$ as soon as $G_i\cong G_j$. We consider the characteristic subgroup $N=G'G_1^{r_1}\cdots G_n^{r_n}$ of $G$.
        
        Then the projection $\Aut(G)/N\twoheadrightarrow\Out(G)$ splits.
    \end{prop}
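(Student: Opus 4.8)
The plan is to build the splitting as a homomorphism $\Out(G)\to\Aut(G)/N$ using the presentation of $\Out(G)$ from Corollary \ref{thrm_pres_outg}. Since $G$ is a free product of at least two nontrivial factors, its centre is trivial, so $\ad:G\hookrightarrow\Aut(G)$ is injective and, $N$ being characteristic (Lemma \ref{lmm_gpg1r1gnrn_caract}), $\ad(N)$ is normal in $\Aut(G)$; we write $\Aut(G)/N=\Aut(G)/\ad(N)$ and record that the projection onto $\Out(G)=\Aut(G)/\ad(G)$ has kernel $\ad(G)/\ad(N)\cong G/N$. To split it, it suffices to assign to each generator $s$ of the presentation a lift $\hat{s}\in\Aut(G)$ with $\hat{s}\equiv s_0\pmod{\ad(G)}$, and to check that every defining relator maps to $1$ in $\Aut(G)/N$.

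First I would fix the lifts. Leave the factor and permutation automorphisms unchanged, $\hat{\varphi}_i=\varphi_i$ and $\hat{\omega}_{ij}=\omega_{ij}$, and correct each Dehn twist by an inner automorphism, $\hat{\alpha}_{ij}^{\left(\gamma_i\right)}=\ad\left(\gamma_i^{t_i}\right)\alpha_{ij}^{\left(\gamma_i\right)}$, where the integer $t_i$ is chosen so that $(n-1)t_i\equiv 1\pmod{r_i}$. Such a $t_i$ exists precisely because $\gcd(n-1,r_i)=1$, and the choice can be made with $t_i=t_j$ whenever $G_i\cong G_j$ (so that $r_i=r_j$), which is the only situation in which two different indices interact through a permutation automorphism. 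Each $\hat{s}$ differs from $s_0$ by an inner automorphism, hence projects to $s$ in $\Out(G)$.

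The core of the argument is to verify that each relator $r$ of Corollary \ref{thrm_pres_outg} lifts into $\ad(N)$. Since $r$ becomes trivial in $\Out(G)$, its lift $\Psi(r)$ lies in $\ad(G)$, say $\Psi(r)=\ad\left(\delta_r\right)$ with $\delta_r\in G$ unique (using $Z(G)=1$). As $N\supseteq G'$ and $N/G'$ is the image $\prod_i G_i^{r_i}$ of $N$ in the abelianisation $G^{\mathrm{ab}}=G_1\times\cdots\times G_n$, we have $\delta_r\in N$ if and only if the class $\bar{\delta}_r$ lies in $\prod_i G_i^{r_i}$; so it is enough to track defects modulo $G'$. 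This abelianisation step trivialises the bookkeeping: Dehn twists act trivially on $G^{\mathrm{ab}}$, all commutators disappear, and the abelianised correction carried by $\hat{\alpha}_{ij}^{\left(\gamma_i\right)}$ is simply $t_i\bar{\gamma}_i$ in the $G_i$-summand. For each Fuchs-Rabinovich relation \ref{thrm_pres_autg_r1}--\ref{thrm_pres_autg_r11}, which already holds in $\Aut(G)$, the relator contains its Dehn-twist letters in cancelling pairs once one transports corrections past the $\varphi_i$'s and $\omega_{ij}$'s (using $\varphi_i\ad(g)\varphi_i^{-1}=\ad\left(\varphi_i g\right)$ and the analogous identity for $\omega_{ij}$, together with $t_i=t_j$ for $G_i\cong G_j$); hence $\bar{\delta}_r=0$ and $\delta_r\in G'\subseteq N$.

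The single genuinely new relation, and the main obstacle, is relation \ref{cons_pres_outg_12}. Here $G_i$ is abelian, so $\varphi_i=\id$, the uncorrected product $\prod_{j\neq i}\alpha_{ij}^{\left(\gamma_i\right)}$ already equals $\ad\left(\gamma_i^{-1}\right)$ in $\Aut(G)$, and the $(n-1)$ identical corrections contribute $(n-1)t_i\bar{\gamma}_i$; thus $\bar{\delta}_r=\bigl((n-1)t_i-1\bigr)\bar{\gamma}_i$. By the choice of $t_i$ we have $r_i\mid(n-1)t_i-1$, so $\delta_r\in G_i^{r_i}\subseteq N$ for every $\gamma_i\in G_i$. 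This is exactly the point at which the coprimality hypothesis is used. With every relator lifting into $\ad(N)$, the assignment $s\mapsto\hat{s}$ extends to a homomorphism $\Out(G)\to\Aut(G)/N$, which is the desired splitting.
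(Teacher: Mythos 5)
Your proposal is correct and follows essentially the same route as the paper: the identical lifts (standard lifts for factor and permutation automorphisms, Dehn twists corrected by $\ad\left(\gamma_i^{t_i}\right)$ with $(n-1)t_i\equiv1\pmod{r_i}$, which is the paper's $u_i$ from the B\'ezout relation $u_i(n-1)+t_ir_i=1$), and the same verification of the Fuchs-Rabinovich relations with the coprimality used exactly at relation \ref{cons_pres_outg_12}. Your bookkeeping via the abelianisation (defects $\delta_r\in N$ iff $\bar{\delta}_r\in\prod_iG_i^{r_i}$) is just a tidy repackaging of the paper's explicit commutator computations for relations \ref{thrm_pres_autg_r4}--\ref{thrm_pres_autg_r5}, and your explicit insistence on $t_i=t_j$ when $G_i\cong G_j$ makes precise a consistency choice the paper leaves implicit.
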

    \begin{proof}
        For all $i\in\left\{1,\dots,n\right\}$, since $r_i$ is coprime to $n-1$, we may pick integers $u_i,t_i\in\mathbb{Z}$ such that $u_i(n-1)+t_ir_i=1$. We then lift the generators of $\Out(G)$ as follows:
        \begin{itemize}
            \item For a factor automorphism $\varphi_i\in\Phi_i$, we take the standard lift: $\hat{\varphi}_i=\left(\varphi_i\right)_0$.
            \item For a permutation automorphism $\omega_{ij}\in\Omega$, we take the standard lift: $\hat{\omega}_{ij}=\left(\omega_{ij}\right)_0$.
            \item For a Dehn twist $\alpha_{ij}^{\left(\gamma_i\right)}\in A_{ij}$, we correct the standard lift by\[\hat{\alpha}_{ij}^{\left(\gamma_i\right)}=\ad\left(\gamma_i^{u_i}\right)\circ\left(\alpha_{ij}^{\left(\gamma_i\right)}\right)_0.\]
        \end{itemize}
        We have to check that the images of $\hat{\varphi}_i$, $\hat{\omega}_{ij}$ and $\hat{\varphi}_{ij}^{\left(\gamma_i\right)}$ in $\Aut(G)/N$ satisfy the relations $\ref{cons_pres_outg_12}-\ref{thrm_pres_autg_r11}$ of $\Out(G)$.
        
        Since the lifts of factor automorphisms and permutation automorphisms are not corrected, the relations involving these types of automorphisms only are automatically satisfied. This is the case of relations \ref{thrm_pres_autg_r1}, \ref{thrm_pres_autg_r6} and \ref{thrm_pres_autg_r7}. There remain four kinds of relations:
        \begin{description}
            \item[$\ref{thrm_pres_autg_r2}-\ref{thrm_pres_autg_r3}$] The relations between factor automorphisms and Dehn twists. We check by computation that they are satisfied in $\Aut(G)$, and therefore in $\Aut(G)/N$.
            \item[$\ref{thrm_pres_autg_r8}-\ref{thrm_pres_autg_r11}$] The relations between permutation automorphisms and Dehn twists. We check by computation that they are satisfied in $\Aut(G)$, and therefore in $\Aut(G)/N$.
            \item[$\ref{thrm_pres_autg_r4}-\ref{thrm_pres_autg_r5}$] The relations between Dehn twists. We check that
            \begin{align*}
                \hat{\alpha}_{ij}^{\left(\gamma_i\right)}\hat{\alpha}_{k\ell}^{\left(\gamma_k\right)}&=\ad\left(\left[\gamma_i^{u_i},\gamma_k^{u_k}\right]\right)\circ\hat{\alpha}_{k\ell}^{\left(\gamma_k\right)}\hat{\alpha}_{ij}^{\left(\gamma_i\right)},\\
                \hat{\alpha}_{ki}^{\left(\gamma_k\right)}\hat{\alpha}_{ij}^{\left(\gamma_i\right)}\left(\hat{\alpha}_{ki}^{\left(\gamma_k\right)}\right)^{-1}&=\ad\left(\left[\gamma_k^{u_k-1},\gamma_i^{u_i}\right]\left[\gamma_i^{u_i},\gamma_k^{-u_k}\right]\right)\circ\left(\hat{\alpha}_{kj}^{\left(\gamma_k\right)}\right)^{-1}\hat{\alpha}_{ij}^{\left(\gamma_i\right)}\hat{\alpha}_{kj}^{\left(\gamma_k\right)},
            \end{align*}
            so these relations are satisfied in $\Aut(G)/N$ (but not in $\Aut(G)$).
            \item[$\ref{cons_pres_outg_12}$] The relation of $\Out(G)$. If $\varphi_i\in\Phi_i$ is given by ${\varphi_i}_{\left|G_i\right.}=\ad\left(\gamma_i^{-1}\right)$, we see that\[\hat{\varphi}_i\prod_{j\neq i}\hat{\alpha}_{ij}^{\left(\gamma_i\right)}=\ad\left(\gamma_i^{u_i(n-1)-1}\right)\circ\hat{\varphi}_i=\ad\left(\gamma_i^{t_ir_i}\right)\circ\hat{\varphi}_i.\]Since $G_i$ is abelian, $\hat{\varphi}_i=\id_G$, and since $G_i^{r_i}\subset N$, we have $\gamma_i^{t_ir_i}\in N$, so the last relation is satisfied in $\Aut(G)/N$.
        \end{description}
        Hence, the map defined on the generators of $\Out(G)$ by $s\mapsto\hat{s}$ induces a group homomorphism $\Out(G)\rightarrow\Aut(G)/N$, which is a splitting of the projection since each generator $s$ is the image of $\hat{s}$ in $\Out(G)$.
    \end{proof}
    
    \subsection{Embedding \texorpdfstring{$\Out(G)$}{Out(G)} into \texorpdfstring{$\Out(N)$}{Out(N)}}
    
    \paragraph{}
    Under certain assumptions, we have constructed a monomorphism $\Out(G)\hookrightarrow\Aut(G)/N$ by splitting the projection $\Aut(G)/N\twoheadrightarrow\Out(G)$. Since $N$ is a characteristic subgroup of $G$, we have in addition a restriction homomorphism $\Aut(G)\rightarrow\Aut(N)$ given by $\psi\mapsto\psi_{\left|N\right.}$, which induces a morphism $\Aut(G)/N\rightarrow\Out(N)$. The following lemma generalises a result of Collins \cite{collins} on the injectivity of the restriction homomorphism. We adapt an argument given by Bridson and Vogtmann \cite[Lem. 1]{bv} in the case where $G$ is a free group.
    
    \begin{lmm}\label{lmm_collins}
        Let $G=G_1\ast\cdots\ast G_n$ be a free product of non-trivial groups, with $n\geq 2$. We assume that $G\not\cong\mathbb{Z}/2\ast\mathbb{Z}/2$. If $N$ is a characteristic finite index subgroup of $G$, then the restriction homomorphism $\Aut(G)\rightarrow\Aut(N)$ is injective.
    \end{lmm}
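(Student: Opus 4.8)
The plan is to show that the kernel of the restriction map is trivial. Let $\psi \in \Aut(G)$ satisfy $\psi_{|N} = \id_N$; I must prove $\psi = \id_G$. The first observation is that, since $N$ is characteristic it is in particular normal, and as $[G:N] = m < \infty$, every element $gN$ of the finite quotient $G/N$ has order dividing $m$, so $g^m \in N$ for all $g \in G$. Consequently $\psi(g)^m = \psi(g^m) = g^m$ for every $g$. The strategy is then in two steps: first use this $m$-th power identity together with the geometry of the Bass--Serre tree to show that $\psi$ fixes every hyperbolic element, and then show that hyperbolic elements generate $G$, whence $\psi = \id_G$.

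For the first step, let $A$ be a Bass--Serre tree of the splitting $G = G_1 \ast \cdots \ast G_n$, on which edge stabilisers are trivial. Given a hyperbolic (loxodromic) element $h$, with axis $L$ and positive translation length, the element $h^m$ is again hyperbolic with the same axis. I would argue that $Z_G(h^m)$ is infinite cyclic: any $k$ commuting with $h^m$ must preserve $L$ and commute with a translation of $L$, hence act on $L$ by a translation, and the homomorphism $Z_G(h^m) \to \mathbb{Z}$ recording translation lengths is injective because its kernel fixes $L$ pointwise, hence fixes an edge and is therefore trivial. Now both $h$ and $\psi(h)$ commute with $h^m$, so they lie in the torsion-free group $Z_G(h^m)$, where the identity $\psi(h)^m = h^m$ forces $\psi(h) = h$.

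For the second step I reduce to showing that every non-trivial element $g$ of each factor $G_i$ is a product of hyperbolic elements, since the $G_i$ generate $G$; concretely it suffices to find a hyperbolic $x$ such that $x^{-1}g$ is again hyperbolic, for then $g = x\,(x^{-1}g)$ is fixed by $\psi$. When $n \geq 3$ one can pick non-trivial $b, c$ in two factors other than $G_i$ and take $x = cb$, so that $x^{-1}g = b^{-1}c^{-1}g$ is cyclically reduced of syllable length $3$. When $n = 2$, the hypothesis $G \not\cong \mathbb{Z}/2 \ast \mathbb{Z}/2$ guarantees a factor of order at least $3$: if $G_i$ itself has order $\geq 3$ one takes $x = ba$ with $a \in G_i \smallsetminus \{1, g\}$ and $b$ in the other factor; if instead $g$ has order $2$, one manufactures $x = b_1 g b_2$ and checks that $x^{-1}g = b_2^{-1} g b_1^{-1} g$, choosing $b_1, b_2$ in the order-$\geq 3$ factor with $b_2 b_1 \neq 1$ so that both words are cyclically reduced. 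In each case $\psi(g) = \psi(x)\,\psi(x^{-1}g) = x\,(x^{-1}g) = g$, and since the factors generate $G$ we conclude $\psi = \id_G$.

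The hard part will be precisely this last, combinatorial step, and within it the case where some factor is $\mathbb{Z}/2$: there is then no room to avoid cancellation using that factor alone, which is exactly why the hypotheses $n \geq 2$ and $G \not\cong \mathbb{Z}/2 \ast \mathbb{Z}/2$ are indispensable. I expect most of the verification effort to go into confirming, in each configuration, that the manufactured words $x$ and $x^{-1}g$ are genuinely hyperbolic (i.e. cyclically reduced of syllable length at least $2$), which is a routine but careful normal-form computation.
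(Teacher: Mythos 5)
Your proof is correct, and its first half coincides in substance with the paper's: both start from the identity $\psi(g)^m=g^m$ for $m=\left[G:N\right]$ and deduce that $\psi$ fixes every element not conjugate into a factor. The paper disposes of this step in one line by ``considering normal forms'' (i.e.\ uniqueness of roots of hyperbolic elements), whereas you prove it via the Bass--Serre tree, showing $Z_G\left(h^m\right)$ is infinite cyclic through the translation-length homomorphism; your argument is a rigorous unpacking of the same fact. Where you genuinely diverge is the second half. The paper never decomposes factor elements into products of hyperbolics. Instead, for $g_i\in G_i\smallsetminus\{1\}$ and $g_j\in G_j\smallsetminus\{1\}$ with $i\neq j$, it applies the first step to the fixed hyperbolic elements $g_ig_j^{-1}$ and $g_jg_i$ to produce a single ``defect'' element $\gamma\in G$ with $\psi(x)=x\gamma$ for all nontrivial factor elements $x$, then derives $x=\gamma x\gamma$, and finally kills $\gamma$ by a normal-form count in which the hypothesis $G\not\cong\mathbb{Z}/2\ast\mathbb{Z}/2$ enters exactly once, via $\sum_i\left(\left|G_i\right|-1\right)\geq 3$. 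Your route --- writing each $g\in G_i$ as $x\cdot\left(x^{-1}g\right)$ with both parts hyperbolic --- buys a cleaner conceptual statement (the hyperbolic elements, all fixed by $\psi$, generate $G$, which visibly fails for $\mathbb{Z}/2\ast\mathbb{Z}/2$, where they generate only the index-two translation subgroup) at the price of a case analysis on $n$ and on factor orders; the paper's defect-element argument is uniform and shorter. One hair to comb in your write-up: in the $n=2$ cases the words $a^{-1}b^{-1}g$ and $b_1gb_2$ are not literally cyclically reduced, since their first and last syllables lie in the same factor; what you actually need, and what your conditions $a\notin\{1,g\}$ and $b_2b_1\neq1$ deliver, is that the cyclic reductions (conjugates $b^{-1}ga^{-1}$ and $b_2b_1g$) still have syllable length at least $2$, hence are hyperbolic. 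So there is no gap, only a wording slip.
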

    \begin{proof}
        Let $\psi\in\Aut(G)$ such that $\psi_{\left|N\right.}=\id_N$. We write $k=\left[G:N\right]$. Hence, for all $x\in G$, since $x^k\in N$, we have\[\psi(x)^k=\psi\left(x^k\right)=x^k.\]If $x$ does not belong to a conjugate of one of the factors $G_i$, then considering the normal forms of $x$ and $\psi(x)$ shows that $\psi(x)=x$.
        
        Let us now observe that, for $g_i\in G_i\smallsetminus\{1\}$ and $g_j\in G_j\smallsetminus\{1\}$, with $i\neq j$, we have\[\psi\left(g_i\right)\psi\left(g_j\right)^{-1}=\psi\left(g_ig_j^{-1}\right)=g_ig_j^{-1},\]hence $g_i^{-1}\psi\left(g_i\right)=g_j^{-1}\psi\left(g_j\right)$. Hence, there is an element $\gamma\in G$ such that for all $x\in\bigcup_iG_i\smallsetminus\{1\}$,\[\psi(x)=x\gamma.\]For $g_i\in G_i\smallsetminus\{1\}$ and $g_j\in G_j\smallsetminus\{1\}$, with $i\neq j$, we have in addition\[g_j\gamma g_i\gamma=\psi\left(g_j\right)\psi\left(g_i\right)=\psi\left(g_jg_i\right)=g_jg_i,\]so that $g_i=\gamma g_i\gamma$.
        
        If $\gamma\neq 1$, we write $\gamma$ in normal form as\[\gamma=w_{i_1}\cdots w_{i_r},\]with $r\geq1$, $i_1,\dots,i_r\in\left\{1,\dots,n\right\}$, $i_\ell\neq i_{\ell+1}$ and $w_{i_\ell}\in G_{i_\ell}\smallsetminus\{1\}$. We thus have, for all $x\in\bigcup_iG_i\smallsetminus\{1\}$,
        \begin{equation*}
            x=w_{i_1}\cdots w_{i_r}xw_{i_1}\cdots w_{i_r}.\tag{$\ast$}
        \end{equation*}
        If $i_1=i_r$, then we can take $j\neq i_1$ (because $n\geq 2$), $x\in G_j\smallsetminus\{1\}$, so that the right-hand side of $\left(\ast\right)$ is reduced, which is impossible. Hence, $i_1\neq i_r$. Since the right-hand side of $\left(\ast\right)$ is not reduced, any choice of $x\in\bigcup_iG_i\smallsetminus\{1\}$ satisfies $x\in\left\{w_{i_r}^{-1},w_{i_1}^{-1}\right\}$. This is impossible because $\sum_i\left(\left|G_i\right|-1\right)\geq 3$ since $G\not\cong\mathbb{Z}/2\ast\mathbb{Z}/2$ and $n\geq 2$.
        
        Therefore $\gamma=1$, so $\psi_{\left|G_i\right.}=\id_{G_i}$ for all $i$ and $\psi=\id_G$.
    \end{proof}
    
    Combining this lemma to Proposition \ref{prop_scindement_autgn_outg} (which gives an embedding $\Out(G)\hookrightarrow\Aut(G)/N$), we obtain our first embedding result.
    
    \begin{mthrm}\label{thrm_plong_outg}
        Let $G=G_1\ast\cdots\ast G_n$ be a free product of abelian groups that are not isomorphic to $\mathbb{Z}$, with $n\geq 2$. For each $i\in\left\{1,\dots,n\right\}$, we choose an integer $r_i\in\mathbb{Z}$ coprime to $n-1$, in such a way that $r_i=r_j$ as soon as $G_i\cong G_j$. We consider the characteristic subgroup $N=G'G_1^{r_1}\cdots G_n^{r_n}$ of $G$. Then there is an embedding\[\Out(G)\hookrightarrow\Out(N).\]
    \end{mthrm}
    \begin{proof}
        Proposition \ref{prop_scindement_autgn_outg} gives an embedding $\Out(G)\hookrightarrow\Aut(G)/N$. Lemma \ref{lmm_collins} implies in addition that $\Aut(G)\rightarrow\Aut\left(N\right)$ is injective if $G\not\cong\mathbb{Z}/2\ast\mathbb{Z}/2$; it follows that $\Aut(G)/N\rightarrow\Out(N)$ is injective. By composition, we get an embedding $\Out(G)\hookrightarrow\Aut(G)/N\hookrightarrow\Out(N)$.
        
        If $G\cong\mathbb{Z}/2\ast\mathbb{Z}/2$, we have $\Out(G)\cong\mathbb{Z}/2$. Moreover, $N=G'\cong\mathbb{Z}$ (if $r_1$ is even) or  $N=G$ (if $r_1$ is odd); in both cases, $\Out(N)\cong\mathbb{Z}/2$, so there is an embedding $\Out(G)\hookrightarrow\Out(N)$.
    \end{proof}
    
    If the factors are finite and coprime to $n-1$, we can take $r_i=\left|G_i\right|$ for all $i$, so that $N=G'$ is a free subgroup of $G$ (by Lemma \ref{lmm_sousgp_deriv_libre}). This yields the following corollary.
    
    \begin{cons}\label{cons_rep_libre_sans_facteur_libre}
        Let $G=G_1\ast\cdots\ast G_n$ be a free product of finite abelian groups, with $n\geq 2$. We assume that $n-1$ is coprime to the order $\left|G_i\right|$ of each factor $G_i$. Then there exists a free subgroup $F$ of finite rank and of finite index in $G$ such that there is an embedding\[\Out(G)\hookrightarrow\Out(F).\]In particular, $\Out(G)$ has a faithful free representation.\qed
    \end{cons}
    
\section{Proof of Theorem \ref{thrm_plong_outg_2}}\label{sect_second_emb_result}
    
    \subsection{Generators and relations with a free factor}\label{subsec_gen_rel_facteur_libre}
        
    \paragraph{}
    We now turn our attention to the case where $G=G_1\ast\cdots\ast G_n$ is a free product of freely indecomposable groups and infinite cyclic groups. We order the factors in such a way that $G_1\cong\cdots\cong G_d\cong\mathbb{Z}$, and $G_{d+1},\dots,G_n\not\cong\mathbb{Z}$. In other words,\[G=F_d\ast G_{d+1}\ast\cdots\ast G_n,\]with $F_d$ free of rank $d$.
    
    A major difference with the case $d=0$ is that, as shown by Grushko's Theorem, there is now significantly more flexibility in the free product decomposition of $G$. In particular, we cannot construct a characteristic subgroup of $G$ as in Lemma \ref{lmm_gpg1r1gnrn_caract}. Instead, we will work with the characteristic subgroup $N=G'G^r$ for $r\in\mathbb{Z}$.
    
    \paragraph{}
    Nielsen \cite{nielsen}, and more recently Gersten \cite{gersten}, gave presentations of $\Aut\left(F_d\right)$, Fuchs-Rabinovich \cite{fr1} did the same for $\Aut\left(G_{d+1}\ast\cdots\ast G_n\right)$, as we have seen above. In fact, in a second article \cite{fr2}, Fuchs-Rabinovich gives a complete presentation of $\Aut(G)$. The resulting presentation is rather long, but it generalises the results for the free part and the part without free factor in a relatively natural way.
    
    The generators of Fuchs-Rabinovich for $\Aut(G)$ are the following.
    \begin{nota}\label{nota_gen_fr2}
        Let $G=G_{1}\ast\cdots\ast G_d\ast G_{d+1}\ast\cdots\ast G_n$, with $G_1,\dots,G_d$ isomorphic to $\mathbb{Z}$ and equipped with respective generators $a_1,\dots,a_d$, and $G_{d+1},\dots,G_n$ freely indecomposable not isomorphic to $\mathbb{Z}$.
        \begin{description}
            \item[\textnormal{Factor automorphisms:}] For $i\in\left\{d+1,\dots,n\right\}$, we consider the group $\Phi_i$ of automorphisms of the factor $G_i$ (see Notation \ref{nota_gen_fr}).
            \item[\textnormal{Reflections:}] For $i\in\left\{1,\dots,d\right\}$, we set $\tau_i:a_i\mapsto a_i^{-1}$. The automorphism $\tau_i$ is called the \emph{reflection} of $G_i$ and we have $\Phi_i=\left\{\id,\tau_i\right\}$.
            \item[\textnormal{Permutation automorphisms:}] We consider $\Omega=\left\{\omega_{ij},\:i\neq j,\:G_i\cong G_j\right\}$ the group of permutation automorphisms, defined as in Notation \ref{nota_gen_fr}, with the additional convention that $\omega_{ij}:a_i\mapsto a_j$ and $a_j\mapsto a_i$ when $i,j\leq d$.
            \item[\textnormal{Dehn twists:}] For $i\in\left\{1,\dots,n\right\}$, $j\in\left\{d+1,\dots,n\right\}$, $i\neq j$, we consider $A_{ij}$ the group of Dehn twists of type $ij$ (see Notation \ref{nota_gen_fr}). If $i\in\left\{1,\dots,d\right\}$, we write $\alpha_{ij}=\alpha_{ij}^{\left(a_i\right)}$.
            \item[\textnormal{Right transvections:}] For $i\in\left\{1,\dots,n\right\}$, $j\in\left\{1,\dots,d\right\}$, $i\neq j$, $\gamma_i\in G_i$, we define
            \begin{align*}
                \rho_{ij}^{\left(\gamma_i\right)}:a_j\in G_j&\mapsto a_j\gamma_i,\\
                g_k\in G_k&\mapsto g_k\quad\textrm{if $k\neq j$}.
            \end{align*}
            We write $P_{ij}=\left\{\rho_{ij}^{\left(\gamma_i\right)},\:\gamma_i\in G_i\right\}$ and $P=\bigcup_{i\neq j}P_{ij}$. Elements of $P$ are called \emph{right transvections}. If $i\in\left\{1,\dots,d\right\}$, we write $\rho_{ij}=\rho_{ij}^{\left(a_i\right)}$.
            \item[\textnormal{Left transvections:}] For $i\in\left\{1,\dots,n\right\}$, $j\in\left\{1,\dots,d\right\}$, $i\neq j$, $\gamma_i\in G_i$, we define
            \begin{align*}
                \lambda_{ij}^{\left(\gamma_i\right)}:a_j\in G_j&\mapsto \gamma_ia_j,\\
                g_k\in G_k&\mapsto g_k\quad\textrm{if $k\neq j$}.
            \end{align*}
            We write $\Lambda_{ij}=\left\{\lambda_{ij}^{\left(\gamma_i\right)},\:\gamma_i\in G_i\right\}$ and $\Lambda=\bigcup_{i\neq j}\Lambda_{ij}$. Elements of $\Lambda$ are called \emph{left transvections}. If $i\in\left\{1,\dots,d\right\}$, we write $\lambda_{ij}=\lambda_{ij}^{\left(a_i\right)}$.
        \end{description}
    \end{nota}
    Note that we have inverted Bridson and Vogtmann's notations: we write $\rho_{ij},\lambda_{ij}$ instead of $\rho_{ji},\lambda_{ji}$ to be consistent with Fuchs-Rabinovich.
    
    We have a presentation of $\Aut(G)$ as follows.
    \begin{thrm}[Fuchs-Rabinovich \cite{fr2}]\label{thrm_pres_autg_2}
        Let $G=G_1\ast\cdots\ast G_d\ast G_{d+1}\ast\cdots\ast G_n$ be a free product of freely indecomposable groups, with $G_1\cong\cdots\cong G_d\cong\mathbb{Z}$ and $G_{d+1},\dots,G_n\not\cong\mathbb{Z}$. Then $\Aut(G)$ is generated by factor automorphisms, reflections, permutation automorphisms, Dehn twists, left and right transvections. Moreover, the following relations give a presentation of $\Aut(G)$:
        \begin{description}
            \item[$\ref{thrm_pres_autg_r1}-\ref{thrm_pres_autg_r11}$] The relations of Theorem \ref{thrm_pres_autg}, written for all the indices for which they make sense.
        \end{description}
        \begin{enumerate}[label=\textnormal{(\arabic*)}]
            \setcounter{enumi}{11}
            \item Nielsen's relations \cite{nielsen} of $\Aut\left(F_d\right)$, written for all the indices for which they make sense.\label{thrm_pres_autg_r12}
            \item $\alpha_{ij}^{\left(\gamma_i\right)}\tau_k=\tau_k\alpha_{ij}^{\left(\gamma_i\right)}$ for $\gamma_i\in G_i$, $i,j\neq k$.\label{thrm_pres_autg_r13}
            \item $\alpha_{ij}\tau_i=\tau_i\alpha_{ij}^{-1}$.\label{thrm_pres_autg_r14}
            \item $\rho_{ij}^{\left(\gamma_i\right)}\varphi_k=\varphi_k\rho_{ij}^{\left(\gamma_i\right)}$ for $\gamma_i\in G_i$, $\varphi_k\in\Phi_k$, $i,j\neq k$.\label{thrm_pres_autg_r15}
            \item $\rho_{ij}^{\left(\varphi_i\gamma_i\right)}\varphi_i=\varphi_i\rho_{ij}^{\left(\gamma_i\right)}$ for $\gamma_i\in G_i$, $\varphi_i\in\Phi_i$.\label{thrm_pres_autg_r16}
            \item $\rho_{ij}^{\left(\gamma_i\right)}\alpha_{k\ell}^{\left(\gamma_k\right)}=\alpha_{k\ell}^{\left(\gamma_k\right)}\rho_{ij}^{\left(\gamma_i\right)}$ for $\gamma_i\in G_i$, $\gamma_k\in G_k$, $i,j\neq k,\ell$.\label{thrm_pres_autg_r17}
            \item $\rho_{ij}^{\left(\gamma_i\right)}\alpha_{i\ell}^{\left(\gamma_i\right)}=\alpha_{i\ell}^{\left(\gamma_i\right)}\rho_{ij}^{\left(\gamma_i\right)}$ for $\gamma_i\in G_i$.\label{thrm_pres_autg_r18}
            \item $\rho_{ij}^{\left(\gamma_i\right)}\alpha_{j\ell}=\alpha_{j\ell}\rho_{ij}^{\left(\gamma_i\right)}\alpha_{i\ell}^{\left(\gamma_i\right)}$ for $\gamma_i\in G_i$.\footnote{A misprint in \cite{fr2} was corrected in \cite[\S{} 4]{gilbert}.}\label{thrm_pres_autg_r19}
            \item $\left(\rho_{kj}^{\left(\gamma_k\right)}\right)^{-1}\rho_{ij}^{\left(\gamma_i\right)}\rho_{kj}^{\left(\gamma_k\right)}\alpha_{ki}^{\left(\gamma_k\right)}=\alpha_{ki}^{\left(\gamma_k\right)}\rho_{ij}^{\left(\gamma_i\right)}$ for $\gamma_i\in G_i$, $\gamma_k\in G_k$.\label{thrm_pres_autg_r20}
            \item $\alpha_{ji}\rho_{ij}^{\left(\gamma_i\right)}=\lambda_{ij}^{\left(\gamma_i\right)}\alpha_{ji}\varphi_i$ for $\gamma_i\in G_i$ with $\left(\varphi_i\right)_{\left|G_i\right.}=\ad\left(\gamma_i^{-1}\right)$.\label{thrm_pres_autg_r21}
            \item $\tau_k\varphi_i=\varphi_i\tau_k$ for $\varphi_i\in\Phi_i$, $i\neq k$.\label{thrm_pres_autg_r22}
            \item $\tau_i\omega_{k\ell}=\omega_{k\ell}\tau_i$ for $i\neq k,\ell$.\label{thrm_pres_autg_r23}
        \end{enumerate}
    \end{thrm}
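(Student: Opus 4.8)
The statement is a presentation theorem, so the plan breaks into three tasks: exhibiting the generating set, verifying that the listed relations hold, and proving \emph{completeness} (that every relation among the generators is a consequence of those listed). Since the two extreme cases are already available — Nielsen's and Gersten's presentations of $\Aut(F_d)$ \cite{nielsen,gersten} and the free-factor-free presentation of Theorem \ref{thrm_pres_autg} — the natural strategy is to interpolate between them, handling the free factor $F_d$ and the non-free factors $G_{d+1},\dots,G_n$ simultaneously rather than in isolation.

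For \textbf{generation}, I would run a peak-reduction argument of Whitehead--Nielsen type, adapted to free products. Fix the free-product generating set of $G$ and assign to each $\psi\in\Aut(G)$ the total syllable length of the normal forms of the images of these generators. Each elementary generator (factor automorphism, reflection, permutation, Dehn twist, transvection) alters this complexity in a controlled way, and the key lemma is that any automorphism not already of ``factor type'' admits an elementary move strictly lowering the complexity. This lemma is cleanest to obtain from the action of $\Aut(G)$ on the Bass--Serre tree of a Grushko splitting, combined with a length-minimisation / Stallings-fold argument in the spirit of Guirardel--Levitt \cite{gl}; it reduces an arbitrary $\psi$ to one inducing the identity on the factor data, which the factor and permutation generators then finish off.

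For the \textbf{relations}, checking that \ref{thrm_pres_autg_r1}--\ref{thrm_pres_autg_r23} hold is a routine computation directly on the defining formulas of the generators in Notation \ref{nota_gen_fr2} (taking care to use the corrected forms of \ref{thrm_pres_autg_r5} and \ref{thrm_pres_autg_r19} recorded in \cite{gilbert}). This step involves no conceptual difficulty.

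The \textbf{main obstacle} is completeness. The cleanest route is to let $\Aut(G)$ act on a simply connected complex whose vertices record marked Grushko splittings of $G$ — the spine of the Guirardel--Levitt outer space, or a McCullough--Miller-type complex of based free factor systems — and to apply the standard presentation-from-action machinery (Brown's theorem): generators arise from vertex-stabiliser generators together with edge traversals, and relations from stabiliser relations, edge relations, and the boundaries of $2$-cells. The vertex stabilisers should reproduce the factor, reflection and permutation generators with relations \ref{thrm_pres_autg_r1}, \ref{thrm_pres_autg_r6}, \ref{thrm_pres_autg_r7}, \ref{thrm_pres_autg_r12}, \ref{thrm_pres_autg_r22} and \ref{thrm_pres_autg_r23}, while the fold/blow-up edges should produce the Dehn twists and transvections and impose the rest. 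The difficulty is threefold: choosing the complex so that it is simply connected with a strict fundamental domain and tractable stabilisers; reading off the edge- and $2$-cell-relations explicitly; and carrying out the Tietze bookkeeping needed to match the resulting presentation with exactly the list \ref{thrm_pres_autg_r1}--\ref{thrm_pres_autg_r23}, verifying that none is missing and none redundant. It is this final matching, rather than any single geometric input, that I expect to absorb most of the effort, and it is why in practice one invokes Fuchs--Rabinovich \cite{fr2} directly.
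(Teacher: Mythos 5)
The first thing to note is that the paper does not prove this statement at all: it is imported verbatim from Fuchs-Rabinovich \cite{fr2}, with the misprint in relation \ref{thrm_pres_autg_r19} corrected following \cite[\S{} 4]{gilbert}, and it is then used downstream as a black box (just as Theorem \ref{thrm_pres_autg} is imported from \cite{fr1}). So there is no internal argument to measure your proposal against; the only question is whether your sketch would stand on its own as a proof. Your closing remark --- that in practice one invokes \cite{fr2} directly --- is in fact exactly what the author does, and is the correct move within the scope of this paper.

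Judged as a self-contained proof, your sketch has genuine gaps, although it points at the right literature. On \textbf{generation}, the syllable-length complexity is too naive as stated: factor automorphisms and permutations preserve it, and transvections and Dehn twists can raise it, so the claim that every automorphism not of factor type admits a strictly complexity-lowering elementary move is precisely the peak-reduction theorem for free products (due to Fouxe-Rabinovitch, later redone by Collins and Zieschang); it is the bulk of the work and your proposal names it rather than proves it. On \textbf{completeness}, Brown's machinery yields a presentation only after you (a) construct the complex, (b) prove it simply connected, and (c) compute vertex and edge stabilisers and the boundaries of the $2$-cells; none of this is carried out, and it is essentially the content of Gilbert's geometric reproof \cite{gilbert} (and of the later McCullough--Miller construction). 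It is also worth noting that your route is genuinely different from Fuchs-Rabinovich's original argument, which is a purely algebraic induction on the number of factors predating Bass-Serre theory --- so even as a roadmap it describes the modern reproof rather than the cited source. In short: a correct and well-informed plan whose two load-bearing lemmas are left as citations-in-spirit, which is why quoting \cite{fr2} (with \cite{gilbert}'s correction), as the paper does, is the appropriate resolution.
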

    \begin{rk}
        In Theorem \ref{thrm_pres_autg_2}, relations $\ref{thrm_pres_autg_r1}-\ref{thrm_pres_autg_r12}$ are more numerous than the union of relations of $\Aut\left(F_d\right)$ and $\Aut\left(G_{d+1}\ast\cdots\ast G_n\right)$: they include for example relations involving Dehn twists conjugating one of the factors $G_i$ (with $i>d$) by an element of $F_d$. However, the choice of notations means that they are written in the same way as in $\Aut\left(F_d\right)$ or in $\Aut\left(G_{d+1}\ast\cdots\ast G_n\right)$.
    \end{rk}
    
    As before, we can easily deduce a presentation of $\Out(G)$.
    \begin{cons}
        Let $G=G_1\ast\cdots\ast G_d\ast G_{d+1}\ast\cdots\ast G_n$ be a free product of freely indecomposable groups, with $G_1\cong\cdots\cong G_d\cong\mathbb{Z}$ and $G_{d+1},\dots,G_n\not\cong\mathbb{Z}$. Then $\Out(G)$ is generated by factor automorphisms, reflections, permutation automorphisms, Dehn twists, left and right transvections. Moreover, relations $\ref{thrm_pres_autg_r1}-\ref{thrm_pres_autg_r23}$ of Theorem \ref{thrm_pres_autg_2}, together with relation \ref{cons_pres_outg_0} below, give a presentation of $\Out(G)$:
        \begin{enumerate}[label=\textnormal{(\arabic*)}]
            \setcounter{enumi}{-1}
            \item $\varphi_i\circ\prod_{\substack{j\leq d\\j\neq i}}\rho_{ij}^{\left(\gamma_i\right)}\left(\lambda_{ij}^{\left(\gamma_i\right)}\right)^{(-1)}\circ\prod_{\substack{j>d\\j\neq i}}\alpha_{ij}^{\left(\gamma_i\right)}=1$ for $\gamma_i\in G_i$, with ${\varphi_i}_{\left|G_i\right.}=\ad\left(\gamma_i^{-1}\right)$.\label{cons_pres_outg_0}\qedhere\qed
        \end{enumerate}
    \end{cons}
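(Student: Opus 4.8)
The plan is to follow the proof of the analogous presentation in the free-factor-free case, Corollary~\ref{thrm_pres_outg}, the only new ingredient being the treatment of the $\mathbb{Z}$-factors. The starting point is the Fuchs-Rabinovich presentation $\Aut(G)=\langle S\mid R\rangle$ of Theorem~\ref{thrm_pres_autg_2}, where $S$ is the given generating set and $R$ consists of relations \ref{thrm_pres_autg_r1}--\ref{thrm_pres_autg_r23}. Since $\Out(G)=\Aut(G)/\Inn(G)$, a presentation of $\Out(G)$ is obtained from that of $\Aut(G)$ by adjoining any family $R_1$ of words in $S$ whose normal closure in $\Aut(G)$ is exactly $\Inn(G)$; we take $R_1$ to be the relators \ref{cons_pres_outg_0}. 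Thus the entire statement reduces to verifying that these relators, ranged over all $i\in\{1,\dots,n\}$ and all $\gamma_i\in G_i$, normally generate $\Inn(G)$.

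First I would compute, for fixed $i$ and $\gamma_i\in G_i$, the automorphism represented by the left-hand side of \ref{cons_pres_outg_0}, checking factor by factor that it equals the inner automorphism $\ad(\gamma_i^{-1})\in\Aut(G)$. On $G_i$ the factor automorphism $\varphi_i$ contributes $\ad(\gamma_i^{-1})|_{G_i}$ by its defining property, while every other term fixes $G_i$. On a factor $G_j$ with $j>d$, $j\neq i$, only the Dehn twist $\alpha_{ij}^{(\gamma_i)}$ is nontrivial, conjugating $G_j$ by $\gamma_i$. On a free factor $G_j=\langle a_j\rangle$ with $j\leq d$, $j\neq i$, only the pair $\rho_{ij}^{(\gamma_i)}(\lambda_{ij}^{(\gamma_i)})^{-1}$ acts: since both of these fix $G_i$ and hence $\gamma_i$, and since one multiplies $a_j$ on the right by $\gamma_i$ and the other on the left by $\gamma_i^{-1}$, their composite sends $a_j\mapsto\gamma_i^{-1}a_j\gamma_i$. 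Hence the left-hand side of \ref{cons_pres_outg_0} acts as $\ad(\gamma_i^{-1})$ on every free-product factor, so equals $\ad(\gamma_i^{-1})$ as an element of $\Aut(G)$.

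Granting this, each relator lies in $\Inn(G)$, and since $\Inn(G)\trianglelefteq\Aut(G)$ the normal closure of $R_1$ is contained in $\Inn(G)$. For the reverse inclusion I would invoke that $\ad\colon G\to\Inn(G)$ is surjective and that $G$ is generated by its factors $G_1,\dots,G_n$; as $\gamma_i$ runs over $G_i$ and $i$ over $\{1,\dots,n\}$, the elements $\ad(\gamma_i^{-1})$ run over $\ad\left(\bigcup_i G_i\right)$, which already generates $\Inn(G)$ as a subgroup. Consequently the normal closure of $R_1$ equals $\Inn(G)$, and $\langle S\mid R\cup R_1\rangle$ is a presentation of $\Out(G)$.

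The only delicate point is the composition bookkeeping in the second paragraph: one must respect the conventions of Notation~\ref{nota_gen_fr2} for the order in which $\rho_{ij}^{(\gamma_i)}$ and $(\lambda_{ij}^{(\gamma_i)})^{-1}$ are applied and confirm that their product conjugates $a_j$ rather than multiplying it one-sidedly (in fact either order yields the same conjugation, since the two transvections affect only $a_j$). Beyond this, no genuinely new phenomenon arises from the presence of the free factor: conjugation of a $\mathbb{Z}$-factor is simply realised by a transvection pair in place of the single Dehn twist used in Corollary~\ref{thrm_pres_outg}, and the argument is otherwise identical.
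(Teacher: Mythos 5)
Your proposal is correct and matches the paper's approach: the paper leaves this corollary essentially without proof, relying on the principle stated in \ref{subsec_gen_rel_outg} that one passes from the Fuchs-Rabinovich presentation of $\Aut(G)$ to one of $\Out(G)$ by adjoining relators whose normal closure is $\Inn(G)$, which is exactly your argument. Your factor-by-factor verification that each relator \ref{cons_pres_outg_0} equals $\ad\left(\gamma_i^{-1}\right)$ (with the transvection pair $\rho_{ij}^{\left(\gamma_i\right)}\left(\lambda_{ij}^{\left(\gamma_i\right)}\right)^{-1}$ realising conjugation on the $\mathbb{Z}$-factors) and that these inner automorphisms generate $\Inn(G)$ correctly supplies the details the paper omits.
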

    
    \subsection{Embedding results with a free factor}\label{subsec_plong_facteur_libre}
    
    \paragraph{}
    We now assume that $G=F_d\ast G_{d+1}\ast\cdots\ast G_n$, with $F_d$ free of rank $d$, $G_{d+1},\dots,G_n$ abelian (hence freely indecomposable) and not isomorphic to $\mathbb{Z}$. We consider the characteristic subgroup $N=G'G^r$, where $r\in\mathbb{Z}$ is an integer to be fixed. To construct a splitting of $\Aut(G)/N\twoheadrightarrow\Out(G)$, we follow the same method as in \ref{subsec_relev_eh}: for each generator $s\in\Out(G)$, we pick a correction $\hat{s}\in\Aut(G)/N$ of the standard lift $s_0\in\Aut(G)/N$, in such a way that the corrected lifts $\hat{s}$ satisfy the relations of $\Out(G)$.
    
    In fact, the generators of $\Out(G)$ in this setting are all of the same kind as those used by Bridson and Vogtmann \cite{bv} when $G=F_d$ (for transvections and reflections) or as those used in \ref{subsec_relev_eh} when $G=G_{d+1}\ast\cdots\ast G_n$ (for factor automorphisms, permutation automorphisms and Dehn twists). Hence, we can simply take the same corrections as before. Relations $\ref{thrm_pres_autg_r1}-\ref{thrm_pres_autg_r12}$ of Theorem \ref{thrm_pres_autg_2} will be automatically satisfied, and it will suffice to check $\ref{thrm_pres_autg_r13}-\ref{thrm_pres_autg_r23}$.
    
    \begin{prop}
        Let $G=F_d\ast G_{d+1}\ast\cdots\ast G_n$, with $F_d$ free of rank $d$, $G_{d+1},\dots,G_n$ abelian not isomorphic to $\mathbb{Z}$. We choose an integer $r\in\mathbb{Z}$ coprime to $n-1$ and we consider the characteristic subgroup $N=G'G^r$ of $G$.
        
        Then the projection $\Aut(G)/N\twoheadrightarrow\Out(G)$ splits.
    \end{prop}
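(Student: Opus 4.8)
The plan is to imitate the proof of Proposition \ref{prop_scindement_autgn_outg}: lift each generator of $\Out(G)$ to a corrected representative in $\Aut(G)/N$ and check that the corrected lifts satisfy the presentation given by Theorem \ref{thrm_pres_autg_2} together with relation \ref{cons_pres_outg_0}. Since $r$ is coprime to $n-1$, first pick integers $u,t\in\mathbb{Z}$ with $u(n-1)+tr=1$. I would then lift factor automorphisms, reflections and permutation automorphisms by their standard lifts (no correction), and correct the remaining generators by conjugation: a Dehn twist $\alpha_{ij}^{(\gamma_i)}$ becomes $\ad(\gamma_i^{u})\circ(\alpha_{ij}^{(\gamma_i)})_0$, exactly as in \ref{subsec_relev_eh}, and a transvection is corrected analogously by a power of $\gamma_i$, following Bridson and Vogtmann.

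Then I would verify the relations. Relations \ref{thrm_pres_autg_r1}--\ref{thrm_pres_autg_r12} are inherited from the free part and the part without free factor, so they are already handled, and the work is to check \ref{thrm_pres_autg_r13}--\ref{thrm_pres_autg_r23} and \ref{cons_pres_outg_0}. I expect these to fall into three groups. First, relations in which the correcting conjugation commutes through — because an uncorrected generator fixes the factor $G_i$ carrying the conjugation, or because $\ad(\gamma_i^{u})$ commutes with a Dehn twist based at $G_i$ — hold verbatim in $\Aut(G)$; this should cover \ref{thrm_pres_autg_r13}--\ref{thrm_pres_autg_r18}, \ref{thrm_pres_autg_r22} and \ref{thrm_pres_autg_r23}. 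Second, relations mixing two generators with distinct conjugating factors $G_i,G_k$ produce a discrepancy that is an inner automorphism by a commutator $[\gamma_i^{u},\gamma_k^{u}]$ (or a product of such), and since $G'\subseteq N$ the relation holds in $\Aut(G)/N$. Third, the genuinely mixed relations \ref{thrm_pres_autg_r19}--\ref{thrm_pres_autg_r21}, linking right transvections, left transvections and Dehn twists, I would treat by computing both sides explicitly and checking that they differ by an inner automorphism whose conjugating element becomes trivial in the abelian quotient $G/N$, hence lies in $G'\subseteq N$.

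The decisive step is relation \ref{cons_pres_outg_0}, which encodes the inner automorphism $\ad(\gamma_i^{-1})$. Its left-hand side has $n-1$ conjugating moves — one transvection pair $\rho_{ij}^{(\gamma_i)}(\lambda_{ij}^{(\gamma_i)})^{-1}$ for each $j\leq d$ with $j\neq i$, and one Dehn twist $\alpha_{ij}^{(\gamma_i)}$ for each $j>d$ with $j\neq i$ — while the factor automorphism $\varphi_i$ is trivial because $G_i$ is abelian. Collecting the corrections, each of which commutes with the standard moves, I would show the product of corrected lifts equals $\ad(\gamma_i^{u(n-1)})$ composed with the standard product $\ad(\gamma_i^{-1})$, namely $\ad(\gamma_i^{u(n-1)-1})=\ad(\gamma_i^{-tr})$; and since $\gamma_i^{-tr}=(\gamma_i^{r})^{-t}\in G^r\subseteq N$, the relation holds in $\Aut(G)/N$. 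This is exactly where coprimality of $r$ with $n-1$ is used, and it drives the whole construction.

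The main obstacle I anticipate is arranging the transvection corrections so that each transvection pair in \ref{cons_pres_outg_0} contributes exactly one factor $\ad(\gamma_i^{u})$, matching the Dehn-twist contribution and producing the required total $\ad(\gamma_i^{u(n-1)})$. Correcting both $\rho_{ij}^{(\gamma_i)}$ and $\lambda_{ij}^{(\gamma_i)}$ by the same conjugation would make the corrections cancel inside each factor $\rho_{ij}^{(\gamma_i)}(\lambda_{ij}^{(\gamma_i)})^{-1}$, which acts as the twist $a_j\mapsto\gamma_i^{-1}a_j\gamma_i$, leaving only the contribution of the Dehn twists and breaking the required total in \ref{cons_pres_outg_0}. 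So the correction must be applied asymmetrically — to one of the two transvection families only — and I would then re-examine relation \ref{thrm_pres_autg_r21} to confirm that this asymmetric choice remains consistent modulo $N$. Once all relations are verified, the assignment $s\mapsto\hat s$ induces a homomorphism $\Out(G)\to\Aut(G)/N$ splitting the projection, since each $\hat s$ maps back to $s$.
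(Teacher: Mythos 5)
Your overall strategy is the paper's own: the same B\'ezout integers $u,t$ with $u(n-1)+tr=1$, the same correction $\ad\left(\gamma_i^{u}\right)$ of Dehn twists, an asymmetric correction of the transvections (the paper leaves right transvections uncorrected and sets $\hat{\lambda}_{ij}^{\left(\gamma_i\right)}=\ad\left(\gamma_i^{-u}\right)\circ\left(\lambda_{ij}^{\left(\gamma_i\right)}\right)_0$, exactly the asymmetry you argue for), and the same decisive computation in relation \ref{cons_pres_outg_0}, where the product of the $n-1$ corrected moves yields $\ad\left(\gamma_i^{u(n-1)-1}\right)=\ad\left(\gamma_i^{-tr}\right)\in\ad(N)$. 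However, there is a genuine gap: you lift the reflections $\tau_i$ by their standard lifts, whereas the paper corrects them by $\hat{\tau}_i=\ad\left(a_i^{u}\right)\circ\left(\tau_i\right)_0$, and this correction is not optional. Your claim that relations $\ref{thrm_pres_autg_r1}-\ref{thrm_pres_autg_r12}$ are ``inherited'' fails for your lifts. The identity $\tau_j\rho_{ij}\tau_j=\lambda_{ij}^{-1}$ holds in $\Aut\left(F_d\right)$, hence is a consequence of Nielsen's relations \ref{thrm_pres_autg_r12}; if your lifts satisfied all of $\ref{thrm_pres_autg_r1}-\ref{thrm_pres_autg_r12}$ modulo $N$, this identity would hold modulo $N$ as well. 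But with $\hat{\tau}_j=\left(\tau_j\right)_0$ and $\hat{\rho}_{ij}=\left(\rho_{ij}\right)_0$ one computes
\begin{equation*}
\hat{\tau}_j\hat{\rho}_{ij}\hat{\tau}_j=\left(\tau_j\rho_{ij}\tau_j\right)_0=\left(\lambda_{ij}\right)_0^{-1},\qquad\hat{\lambda}_{ij}^{-1}=\left(\lambda_{ij}\right)_0^{-1}\circ\ad\left(a_i^{u}\right)=\ad\left(a_i^{u}\right)\circ\left(\lambda_{ij}\right)_0^{-1},
\end{equation*}
using that $\lambda_{ij}$ fixes $a_i$; the two sides differ by $\ad\left(a_i^{u}\right)$. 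Since $u(n-1)+tr=1$ forces $\gcd(u,r)=1$, the image of $a_i^{u}$ in $G/N\cong G^{\mathrm{ab}}/\left(G^{\mathrm{ab}}\right)^r$ is non-trivial whenever $|r|\geq2$ (the $a_i$-coordinate is $u\bmod r\neq0$), so $a_i^{u}\notin N$ and the relation genuinely fails in $\Aut(G)/N$: your assignment $s\mapsto\hat{s}$ does not define a homomorphism on $\Out(G)$. Correcting $\rho$ instead of $\lambda$ produces the same obstruction, so no choice of asymmetric transvection correction alone repairs this.

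The fix is precisely the paper's corrected reflection $\hat{\tau}_i=\ad\left(a_i^{u}\right)\circ\left(\tau_i\right)_0$ (the algebraic restatement of Bridson--Vogtmann's correction of inversions, which your proposal omitted). With it, the discrepancy in the relation above becomes $\ad\left(a_j^{u}\left(a_j^{-1}a_i\right)^{u}a_i^{-u}\right)$, whose conjugating element dies in the abelianisation and hence lies in $G'\subseteq N$ — the same mechanism by which the paper absorbs the discrepancies in relations \ref{thrm_pres_autg_r13}, \ref{thrm_pres_autg_r19} and \ref{thrm_pres_autg_r21}. Apart from this missing correction (and the consequent need to recheck \ref{thrm_pres_autg_r13}, \ref{thrm_pres_autg_r14}, \ref{thrm_pres_autg_r22}, \ref{thrm_pres_autg_r23} and the $\tau$-involving Nielsen relations with the corrected $\hat{\tau}_i$), your outline, including the treatment of relation \ref{cons_pres_outg_0} and the final splitting argument, is the paper's proof.
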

    \begin{proof}
        We may pick integers $u,t\in\mathbb{Z}$ such that $u(n-1)+tr=1$. We then lift the generators of $\Out(G)$ as follows:
        \begin{itemize}
            \item For factor automorphisms, permutation automorphisms and Dehn twists, we take the same corrections as in the proof of Proposition \ref{prop_scindement_autgn_outg} (replacing $u_i$ by $u$).
            \item Right transvections are not corrected (we keep the standard lifts).
            \item Left transvections are corrected by $\hat{\lambda}_{ij}^{\left(\gamma_i\right)}=\ad\left(\gamma_i^{-u}\right)\circ\left(\lambda_{ij}^{\left(\gamma_i\right)}\right)_0$.
            \item Reflections are corrected by $\hat{\tau}_{i}=\ad\left(a_i^{u}\right)\circ\left(\tau_{i}\right)_0$.
        \end{itemize}
        Observe that these corrections of transvections and reflections are an algebraic restatement of the corrections of Bridson and Vogtmann \cite{bv}.
        
        It follows that relations $\ref{thrm_pres_autg_r1}-\ref{thrm_pres_autg_r12}$ of Theorem \ref{thrm_pres_autg_2} are satisfied: their verification would be a rewriting of the computations of Proposition \ref{prop_scindement_autgn_outg} (for $\ref{thrm_pres_autg_r1}-\ref{thrm_pres_autg_r11}$) and of Bridson and Vogtmann \cite{bv} (for $\ref{thrm_pres_autg_r12}$). Moreover, relations $\ref{thrm_pres_autg_r15}-\ref{thrm_pres_autg_r16}$ are satisfied because they only involve generators whose lifts are not corrected.
        
        We check that relations $\ref{thrm_pres_autg_r14}-\ref{thrm_pres_autg_r18}$, $\ref{thrm_pres_autg_r20}$ and $\ref{thrm_pres_autg_r22}-\ref{thrm_pres_autg_r23}$ are satisfied in $\Aut(G)$. We give the results of our computations for the other relations, and we see that they are satisfied in $\Aut(G)/N$:
        \begin{enumerate}[label=\textnormal{(\arabic*)}]
            \item[(13)] $\hat{\alpha}_{ij}^{\left(\gamma_i\right)}\hat{\tau}_k=\ad\left(\left[\gamma_i^u,a_k^{u}\right]\right)\circ\hat{\tau}_k\hat{\alpha}_{ij}^{\left(\gamma_i\right)}$.
            \item[(19)] $\hat{\rho}_{ij}^{\left(\gamma_i\right)}\hat{\alpha}_{j\ell}=\ad\left(\left(a_j\gamma_i\right)^u\gamma_i^{-u}a_j^{-u}\right)\circ\hat{\alpha}_{j\ell}\hat{\rho}_{ij}^{\left(\gamma_i\right)}\hat{\alpha}_{i\ell}^{\left(\gamma_i\right)}$.
            \item[(21)] $\hat{\alpha}_{ji}\hat{\rho}_{ij}^{\left(\gamma_i\right)}=\ad\left(a_j^{u}\left(\gamma_ia_j\right)^{-u}\gamma_i^u\right)\circ\hat{\lambda}_{ij}^{\left(\gamma_i\right)}\hat{\alpha}_{ji}\hat{\varphi}_i$.
            \item[(0)] $\hat{\varphi}_i\circ\prod_{\substack{j\leq d\\j\neq i}}\hat{\rho}_{ij}^{\left(\gamma_i\right)}\left(\hat{\lambda}_{ij}^{\left(\gamma_i\right)}\right)^{(-1)}\circ\prod_{\substack{j>d\\j\neq i}}\hat{\alpha}_{ij}^{\left(\gamma_i\right)}=\ad\left(\gamma_i^{u(n-1)-1}\right)=\ad\left(\gamma_i^{tr}\right)$.
        \end{enumerate}
        Hence, $s\mapsto\hat{s}$ gives a splitting of the morphism $\Aut(G)/N\twoheadrightarrow\Out(G)$.
    \end{proof}
    
    \paragraph{}
    Lemma \ref{lmm_collins}, which proved the injectivity of the restriction homomorphism $\Aut(G)\rightarrow\Aut(N)$, was independent of the presence of free factors in the free product. Hence, it remains valid in the present setting, which yields the following result with the same proof as Theorem \ref{thrm_plong_outg}.
    
    \begin{mthrm}\label{thrm_plong_outg_2}
        Let $G=F_d\ast G_{d+1}\ast\cdots\ast G_n$, with $n\geq2$, $F_d$ free of rank $d$, $G_{d+1},\dots,G_n$ abelian not isomorphic to $\mathbb{Z}$. We choose an integer $r\in\mathbb{Z}$ coprime to $n-1$ and we consider the characteristic subgroup $N=G'G^r$ of $G$. Then there is an embedding\[\pushQED{\qed}\Out(G)\hookrightarrow\Out(N).\qedhere\popQED\]
    \end{mthrm}
    \begin{rk}
        When $d=n$, we get the result of Bridson and Vogtmann \cite[Cor. A]{bv}.
    \end{rk}
    
    Taking $r=\left|G_{d+1}\right|\cdots\left|G_n\right|$ yields a generalisation of Corollary \ref{cons_rep_libre_sans_facteur_libre}.
    
    \begin{cons}\label{cons_rep_libre}
        Let $G=F_d\ast G_{d+1}\ast\cdots\ast G_n$, with $n\geq2$, $F_d$ free of rank $d$, $G_{d+1},\dots,G_n$ finite abelian. We assume that $n-1$ is coprime to the order $\left|G_i\right|$ of each factor $G_i$. Then there is a free subgroup $F$ of finite rank and of finite index in $G$ such that there is an embedding\[\Out(G)\hookrightarrow\Out(F).\]In particular, $\Out(G)$ has a faithful free representation.\qed
    \end{cons}
    
    Observe that, by increasing the rank of the free factor, we can always get to a situation where $n-1$ is coprime to $\left|G_i\right|$ for all $i$. We thus get our second corollary.
    
    \begin{cons}\label{cons_rep_libre_2}
        Let $G=F_d\ast G_{d+1}\ast\cdots\ast G_n$, with $F_d$ free of rank $d$, $G_{d+1},\dots,G_n$ finite abelian. Then there is an integer $k\geq0$ such that $\Out\left(F_k\ast G\right)$ has a faithful free representation.\qed
    \end{cons}
    
\section{Application to universal Coxeter groups}\label{sec_univ_coxeter}
    
    \paragraph{}
    Consider $W_n=\left<x_1,\dots,x_n\mid x_1^2,\dots,x_n^2\right>$. The group $W_n$ is called the \emph{universal Coxeter group} of rank $n$; it is the free product of $n$ copies of $\mathbb{Z}/2$, and it is certainly the simplest example of free product of freely indecomposable groups that are not isomorphic to $\mathbb{Z}$. It is therefore natural to ask what properties $\Out\left(W_n\right)$ shares with $\Out\left(F_n\right)$.
    
    We focus in particular on free representations of $\Out\left(W_n\right)$. For $\Aut\left(W_n\right)$, Mühlherr \cite{muehlherr} showed that there is an embedding $\Aut\left(W_n\right)\hookrightarrow\Aut\left(F_{n-1}\right)$ for all $n\geq 3$. On the contrary, Varghese \cite{varghese} showed that, if $n\geq 4$ and $m\leq n-2$, then any morphism from $\Aut\left(W_n\right)$ to $\Aut\left(F_m\right)$, $\Out\left(F_m\right)$ or $GL_m\left(\mathbb{Z}\right)$ has finite image.
    
    We turn our attention to the parallel question for outer automorphisms: we wish to know for which integers $n,m$ there are faithful free representations $\Out\left(W_n\right)\hookrightarrow\Out\left(F_m\right)$. As an application of the results of this text, we have the following corollary.
    \begin{cons}\label{cons_plong_outwn}
        If $n$ is even and $m=2^{n-1}(n-2)+1$, then there is an embedding\[\Out\left(W_n\right)\hookrightarrow\Out\left(F_m\right).\]
    \end{cons}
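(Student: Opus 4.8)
The plan is to realise $W_n$ as a free product meeting the hypotheses of Corollary \ref{cons_rep_libre_sans_facteur_libre} and then to compute the rank of the resulting free subgroup. Indeed, $W_n=\mathbb{Z}/2\ast\cdots\ast\mathbb{Z}/2$ is a free product of $n$ finite abelian factors, each of order $|G_i|=2$. Since $n$ is even, $n-1$ is odd and hence coprime to $2=|G_i|$ for every $i$, so the hypotheses of Corollary \ref{cons_rep_libre_sans_facteur_libre} are satisfied. Taking $r_i=|G_i|=2$ for all $i$, the characteristic subgroup $N=G'G_1^{r_1}\cdots G_n^{r_n}$ simplifies: every element of $G_i=\mathbb{Z}/2$ squares to the identity, so $G_i^{2}=1$ and $N=G'=W_n'$. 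Corollary \ref{cons_rep_libre_sans_facteur_libre} then yields an embedding $\Out(W_n)\hookrightarrow\Out(N)$ with $N=W_n'$ free of finite rank: by Lemma \ref{lmm_sousgp_deriv_libre}, $W_n'$ acts freely on the Bass-Serre tree, and it has finite index $[W_n:W_n']=|W_n^{\mathrm{ab}}|=2^n$ in $W_n$.

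The only remaining work, and the main computational step, is to identify this rank. I would let $A$ be the Bass-Serre tree of $W_n$, so that $W_n\backslash A$ is the star-shaped graph with one central vertex $v_0$ of trivial stabiliser and $n$ leaves $v_1,\dots,v_n$ of stabilisers $\mathbb{Z}/2$, joined by $n$ edges of trivial stabiliser. Following the description of the maximal abelian cover given around Figure \ref{fig_rev_ab}, the deck group $\Deck(p)\cong W_n^{\mathrm{ab}}\cong(\mathbb{Z}/2)^n$ acts on the honest graph $W_n'\backslash A$. The fibre of $v_0$ is a single free orbit, hence has $2^n$ vertices; the fibre of each $v_i$ is a $(\mathbb{Z}/2)^n$-set with point stabiliser the $i$-th coordinate subgroup $\mathbb{Z}/2$, hence has $2^{n-1}$ vertices; and each of the $n$ edges lifts to a free orbit of $2^n$ edges. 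Thus $W_n'\backslash A$ has $V=2^n+n\,2^{n-1}$ vertices and $E=n\,2^n$ geometric edges, so that its Euler characteristic is
\[
\chi\left(W_n'\backslash A\right)=V-E=2^n+n\,2^{n-1}-n\,2^n=2^{n-1}(2-n).
\]

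Since $N=W_n'$ is free, its rank $m$ satisfies $1-m=\chi\left(W_n'\backslash A\right)=2^{n-1}(2-n)$, whence $m=2^{n-1}(n-2)+1$. Equivalently, one may derive this from multiplicativity of the rational Euler characteristic under finite-index subgroups, using $\chi(W_n)=\tfrac{n}{2}-(n-1)=\tfrac{2-n}{2}$ together with $[W_n:W_n']=2^n$. Combining this identification with the embedding of the first paragraph gives $\Out(W_n)\hookrightarrow\Out(F_m)$ for $m=2^{n-1}(n-2)+1$, as claimed. I do not expect a serious obstacle here beyond the bookkeeping of the covering graph; the only point requiring a separate word is the degenerate case $n=2$, where $m=1$ and $W_2\cong\mathbb{Z}/2\ast\mathbb{Z}/2$, which is already treated apart in the proof of Theorem \ref{thrm_plong_outg}.
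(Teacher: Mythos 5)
Your proposal is correct and follows essentially the same route as the paper: it reduces to the embedding $\Out\left(W_n\right)\hookrightarrow\Out\left(W_n'\right)$ via Corollary \ref{cons_rep_libre_sans_facteur_libre} (which is exactly Theorem \ref{thrm_plong_outg} with $r_i=\left|G_i\right|=2$, valid since $n$ even makes $n-1$ odd), and then identifies the rank of $W_n'$ by an Euler-characteristic count of vertex and edge orbits in the quotient of the Bass--Serre tree by $W_n'$, just as the paper does. The only cosmetic difference is your choice of quotient graph of groups (the star with an unlabelled central vertex, giving $V=2^n+n2^{n-1}$, $E=n2^n$) versus the paper's tree with $n$ labelled vertices and $n-1$ edges (giving $V=n2^{n-1}$, $E=(n-1)2^n$); both counts yield $m=2^{n-1}(n-2)+1$, as they must since the genus is a homotopy invariant.
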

    \begin{proof}
        By Theorem \ref{thrm_plong_outg} (with $r_i=2$ for all $i$), there is an embedding $\Out\left(W_n\right)\hookrightarrow\Out\left(W_n'\right)$. Let us consider in addition the $W_n$-tree $A$ such that $W_n\backslash A$ is a tree of group with $n$ vertices labelled by $\mathbb{Z}/2$ and $n-1$ unlabelled edges. Lemma \ref{lmm_sousgp_deriv_libre} implies that $W_n'$ acts freely on $A$, so $W_n'\cong F_m$, where $m$ is the genus (i.e. the rank of the fundamental group) of the graph $W_n'\backslash A$. Observe that $W_n/W_n'\cong\left(\mathbb{Z}/2\right)^n$ and that in $A$, vertex stabilisers under $W_n/W_n'$ have order $2$ and edge stabilisers are trivial. Hence, each edge orbit under $W_n$ corresponds to $2^{n}$ edge orbits under $W_n'$, and each vertex orbit under $W_n$ corresponds to $2^{n-1}$ vertex orbits under $W_n'$. It follows that $W_n'\backslash A$ is a graph with $(n-1)2^n$ edges and $n2^{n-1}$ vertices, hence\[m=\left|E\left(W_n'\backslash A\right)\right|-\left|V\left(W_n'\backslash A\right)\right|+1=2^{n-1}(n-2)+1.\qedhere\]
    \end{proof}
    
    We have no reason to believe that this result covers all values of $n,m$ for which there are embeddings $\Out\left(W_n\right)\hookrightarrow\Out\left(F_m\right)$. To see this, let us examine the case $n=3$, on which Corollary \ref{cons_plong_outwn} does not say anything. The group $\Out\left(W_3\right)$ is simple enough to be computed explicitly, and we can construct a faithful free representation by hand.
    \begin{prop}
        \begin{enumerate}
            \item $\Out\left(W_3\right)\cong W_3\rtimes\mathfrak{S}_3$, where $\mathfrak{S}_3$ acts on $W_3$ by permutation of the generators.\label{prop_outw3_1}
            \item There is an embedding $\Out\left(W_3\right)\hookrightarrow\Out\left(F_4\right)$.
        \end{enumerate}
    \end{prop}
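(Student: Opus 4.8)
For part \ref{prop_outw3_1}, I would specialise the presentation of $\Out(G)$ in Corollary~\ref{thrm_pres_outg} to $G=W_3=\mathbb{Z}/2\ast\mathbb{Z}/2\ast\mathbb{Z}/2$ and simplify. Since $\Aut(\mathbb{Z}/2)$ is trivial there are no nontrivial factor automorphisms, so the generators reduce to the Dehn twists $\alpha_{ij}^{(x_i)}$ and the permutation automorphisms $\omega_{ij}$. Each $\alpha_{ij}^{(x_i)}$ is an involution because $x_i^2=1$, and relation \ref{cons_pres_outg_12} (in which $\varphi_i$ is forced to be trivial) reads $\alpha_{ij}^{(x_i)}\alpha_{ik}^{(x_i)}=1$, thereby identifying the two twists with first index $i$; I write $t_i$ for their common class. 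One checks that relations \ref{thrm_pres_autg_r4} and \ref{thrm_pres_autg_r5} become vacuous in $\Out(W_3)$ (each side involves only the $t$'s and collapses using $t_i^2=1$), so the only relations among $t_1,t_2,t_3$ are $t_i^2=1$, and they generate a copy of $W_3$. The permutation automorphisms form a subgroup isomorphic to $\mathfrak{S}_3$, which injects into $\Out(W_3)$ since a nontrivial permutation of the $x_i$ is not inner; and relations \ref{thrm_pres_autg_r8}--\ref{thrm_pres_autg_r10} show that $\omega_{ij}$ conjugates $t_i$ to $t_j$ and fixes $t_k$. Thus the twists form a normal $W_3$ with complement $\mathfrak{S}_3$ acting by permutation, and the simplified presentation is exactly that of $W_3\rtimes\mathfrak{S}_3$, giving \ref{prop_outw3_1}.

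For part (ii) the key observation is that Corollary~\ref{cons_plong_outwn} does not apply, since $n-1=2$ is not coprime to $|G_i|=2$ and the splitting of Proposition~\ref{prop_scindement_autgn_outg} is unavailable; indeed a torsion-free finite-index subgroup of $W_3$ of rank $4$ must have index $6$ (from $\chi(W_3)=-\tfrac12$) and quotient $\mathfrak{S}_3$, and no such subgroup is characteristic, so the covering machinery cannot be used directly. I would therefore exhibit the embedding by hand. Writing $F_4=\langle a,b,c,d\rangle$, let $\mathfrak{S}_3$ act on $\Out(F_4)$ by permuting $a,b,c$ and fixing $d$, and set
\[t_1:\ a\mapsto a^{-1},\quad d\mapsto ad,\quad b,c\ \text{fixed},\]
with $t_2,t_3$ its images under the $3$-cycle (so $t_2$ inverts $b$ and sends $d\mapsto bd$, and similarly $t_3$). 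Each $t_i$ is manifestly an involution of $F_4$, the $3$-cycle permutes $t_1,t_2,t_3$ cyclically and the transpositions swap them as required; so by the presentation obtained in part \ref{prop_outw3_1} this data defines a homomorphism $\Phi:\Out(W_3)\cong W_3\rtimes\mathfrak{S}_3\to\Out(F_4)$.

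The main obstacle is the injectivity of $\Phi$, and in particular showing that $t_1,t_2,t_3$ generate a \emph{free} product $W_3$ rather than a proper quotient. This is invisible to homology: each $t_i$ acts on $H_1(F_4;\mathbb{Z})$ by a sign change together with a shear of $d$, and the resulting image in $\mathrm{GL}_4(\mathbb{Z})$ is finite (for instance $\Phi(t_1t_2)$ has order $2$ on $H_1$ although it has infinite order in $\Out(F_4)$). Instead I would track the image of $d$: for a reduced word $w=t_{i_1}\cdots t_{i_m}$ with $m\geq1$, an easy induction gives $\Phi(w)(d)=Ud$, where $U$ is a reduced word of length $m$ in $a^{\pm},b^{\pm},c^{\pm}$ (consecutive letters differ because consecutive indices do), so $\Phi(w)(d)\neq d$ and $\Phi(w)\neq\id$ in $\Aut(F_4)$. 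To pass to $\Out(F_4)$, note $\Phi(w)$ preserves $\langle a,b,c\rangle$ and acts there by $a_j\mapsto a_j^{\pm1}$: if some exponent is $-1$ then $\Phi(w)$ is not inner (as $a_j$ is not conjugate to $a_j^{-1}$ in a free group); if all exponents are $+1$ then $\Phi(w)$ fixes $\langle a,b,c\rangle$ pointwise, so any inner representative would be conjugation by an element centralising the non-abelian subgroup $\langle a,b,c\rangle$, forcing it to be trivial, contradicting $\Phi(w)(d)=Ud\neq d$. Finally a short conjugacy-class argument treats a general element $w\pi$ with $\pi\in\mathfrak{S}_3$: a nontrivial $\pi$ sends the conjugacy class of some generator to a different one, whereas inner automorphisms preserve conjugacy classes, so $\Phi(w\pi)$ is inner only if $\pi=1$, reducing to the case already handled (and showing $\mathfrak{S}_3$ embeds). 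Hence $\Phi$ is injective and $\Out(W_3)\hookrightarrow\Out(F_4)$.
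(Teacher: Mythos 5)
Your proof is correct and takes essentially the same route as the paper: part (i) is the same simplification of the Fuchs-Rabinovich presentation (relation (0) identifying the two twists with common first index, relations (4)--(5) collapsing, relations (8)--(10) giving the permutation action of the $\omega_{ij}$, yielding $W_3\rtimes\mathfrak{S}_3$), and part (ii) follows the same scheme of an explicit embedding of $W_3\rtimes\mathfrak{S}_3$ into $\Out\left(F_4\right)$ whose injectivity is verified by first killing the $\mathfrak{S}_3$-component via the action on conjugacy classes of generators and then tracking the image of the fourth generator under a reduced word in the involutions, ruling out inner automorphisms by a centraliser argument. The only difference is cosmetic: your involutions are built from transvections ($d\mapsto a_id$ composed with $a_i\mapsto a_i^{-1}$) whereas the paper uses Dehn twists ($a_4\mapsto a_i^{-1}a_4a_i$ composed with the reflection $\tau_i$) --- these are genuinely distinct elements of $\Out\left(F_4\right)$, but they play an identical role and the arguments are interchangeable.
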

    \begin{proof}
        \begin{enumerate}
            \item We have a morphism $\Out\left(W_3\right)\rightarrow\mathfrak{S}_3$ given by the action of $W_3$ on conjugacy classes of generators; we denote by $\Out_0\left(W_3\right)$ its kernel. Hence, there is an exact sequence\[1\rightarrow\Out_0\left(W_3\right)\rightarrow\Out\left(W_3\right)\rightarrow\mathfrak{S}_3\rightarrow1.\]We can split this exact sequence by mapping $\left(i\,j\right)\mapsto\omega_{ij}$ (we follow Notation \ref{nota_gen_fr}). Therefore $\Out\left(W_3\right)\cong\Out_0\left(W_3\right)\rtimes\mathfrak{S}_3$. There remains to determine $\Out_0\left(W_3\right)$.
            
            It follows from Fuchs-Rabinovich's presentation of $\Out\left(W_3\right)$ (see Corollary \ref{thrm_pres_outg}) that the subgroup $\Out_0\left(W_3\right)$ is generated by Dehn twists (in this case, there is no factor automorphism), with relations $\ref{cons_pres_outg_12}$ and $\ref{thrm_pres_autg_r4}-\ref{thrm_pres_autg_r5}$ (they are the only ones that only involve Dehn twists). Each Dehn twist has order $2$. Writing $\alpha_{ij}=\alpha_{ij}^{\left(x_i\right)}$, relation $\ref{cons_pres_outg_12}$ yields\[\alpha_{12}=\alpha_{13},\quad\alpha_{21}=\alpha_{23},\quad\alpha_{31}=\alpha_{32}.\]Relations $\ref{thrm_pres_autg_r4}-\ref{thrm_pres_autg_r5}$ follow, which gives\[\Out_0\left(W_3\right)=\left<\alpha_{12},\alpha_{21},\alpha_{31}\mid\alpha_{12}^2,\alpha_{21}^2,\alpha_{31}^2\right>.\]We therefore see that $\Out_0\left(W_3\right)\cong W_3$, and the action of $\mathfrak{S}_3$ is given by $\left(1\,2\right)\cdot\alpha_{12}=\alpha_{21}$, $\left(1\,2\right)\cdot\alpha_{31}=\alpha_{32}=\alpha_{31}$ and $\left(2\,3\right)\cdot\alpha_{12}=\alpha_{13}=\alpha_{12}$, $\left(2\,3\right)\cdot\alpha_{31}=\alpha_{21}$. This corresponds to the action of $\mathfrak{S}_3$ by permutation of the generators of $W_3$.
            
            \item By \ref{prop_outw3_1}, it suffices to construct an embedding $W_3\rtimes\mathfrak{S}_3\hookrightarrow\Out\left(F_4\right)$. We denote by $x_1,x_2,x_3$ the generators of $W_3$, $a_1,\dots,a_4$ the generators of $F_4$. We write $\alpha_{ij}=\rho_{ij}\lambda_{ij}^{-1}$ for the Dehn twists in $F_4$ and $\tau_i$ for the reflection $a_i\mapsto a_i^{-1}$ (see Notation \ref{nota_gen_fr2}). For $\sigma\in\mathfrak{S}_3$, we define in addition $\omega_\sigma\in\Out\left(F_4\right)$ by $a_i\mapsto a_{\sigma(i)}$ for $i\in\left\{1,2,3\right\}$ and $a_4\mapsto a_4$.
            
            We then define $f:W_3\rtimes\mathfrak{S}_3\rightarrow\Out\left(F_4\right)$ by
            \begin{align*}
                x_i\in W_3&\longmapsto\alpha_{i4}\tau_i=\tau_i\alpha_{i4}^{-1}\in\Out\left(F_4\right),\\
                \sigma\in\mathfrak{S}_3&\longmapsto\omega_{\sigma}\in\Out\left(F_4\right).
            \end{align*}
            We have $f\left(x_i\right)^2=1$ and $f\left(\sigma\right)f\left(x_i\right)f\left(\sigma\right)^{-1}=f\left(x_{\sigma(i)}\right)$ for all $i\in\left\{1,2,3\right\}$ and $\sigma\in\mathfrak{S}_3$, so $f$ gives a well-defined group homomorphism $W_3\rtimes\mathfrak{S}_3\rightarrow\Out\left(F_4\right)$.
            
            We want to show that $f$ is injective. Let $\gamma=w\sigma\in\Ker f$, with $w\in W_3$ and $\sigma\in\mathfrak{S}_3$. Since $f\left(\gamma\right)=1$ in $\Out\left(F_4\right)$, $f(\gamma)$ acts trivially on the conjugacy classes of the generators $a_i$, so $\sigma=1$. We write $w=x_{i_1}\cdots x_{i_k}$ in normal form, with $k\geq 0$, $i_j\in\left\{1,2,3\right\}$, $i_{j+1}\neq i_j$. We then have
            \begin{align*}
                f\left(\gamma\right)\cdot a_i&=a_i^{\pm1}\quad\textrm{if $i\in\left\{1,2,3\right\}$}\\
                f\left(\gamma\right)\cdot a_4&=\ad\left(a_{i_k}^{\varepsilon_k}\cdots a_{i_1}^{\varepsilon_1}\right)\cdot a_4,
            \end{align*}
            with $\varepsilon_1,\dots,\varepsilon_k\in\left\{\pm1\right\}$. Since $f\left(\gamma\right)=1$ in $\Out\left(F_4\right)$, it follows that $a_{i_k}^{\varepsilon_k}\cdots a_{i_1}^{\varepsilon_1}=1$. Since this is a reduced form in $F_4$, we have $k=0$, so $w=1$ and $\gamma=1$.
            
            Hence, $f:W_3\rtimes\mathfrak{S}_3\hookrightarrow\Out\left(F_4\right)$ is an embedding.\qedhere
        \end{enumerate}
    \end{proof}
    
    \paragraph{}
    This example confirms that out method is not exhaustive: we have been able to construct embeddings $\Out\left(W_n\right)\hookrightarrow\Out\left(F_m\right)$ for some integers $n,m$, but there may exist other values of $n$ for which $\Out\left(W_n\right)$ admits faithful free representations.
    
    In general, the group $\Out(G)$ seems very likely to have faithful free representations for many free products $G$ for which we cannot say anything with the method used here.

\bibliography{2021-M2-Stage-Refs.bib}
\bibliographystyle{plain}
    
\end{document}